  \theoremstyle{plain}
    \newtheorem{thm}{Theorem}[section]
    \newtheorem{prop}[thm]{Proposition}
    \newtheorem{subsec}[thm]{}
\theoremstyle{definition}
    \newtheorem{defn}[thm]{Definition}
        \newtheorem{remark}[thm]{Remark}
    \newtheorem{exam}[thm]{Example}
\theoremstyle{remark}
\title{}
\author{}
\date{}
\begin{document}

\title{Twisted Rota-Baxter operators and Reynolds operators on Lie algebras and NS-Lie algebras}

\author{Apurba Das}

\maketitle

\begin{center}
Department of Mathematics and Statistics,\\
Indian Institute of Technology, Kanpur 208016, Uttar Pradesh, India.\\
Email: apurbadas348@gmail.com
\end{center}



\begin{abstract}
In this paper, we introduce twisted Rota-Baxter operators on Lie algebras as an operator analogue of twisted $r$-matrices. We construct a suitable $L_\infty$-algebra whose Maurer-Cartan elements are given by twisted Rota-Baxter operators. This allows us to define cohomology of a twisted Rota-Baxter operator. This cohomology can be seen as the Chevalley-Eilenberg cohomology of a certain Lie algebra with coefficients in a suitable representation. We study deformations of twisted Rota-Baxter operators from cohomological points of view. Some applications are given to Reynolds operators and twisted $r$-matrices. Next, we introduce a new algebraic structure, called NS-Lie algebras, that is related to twisted Rota-Baxter operators in the same way pre-Lie algebras are related to Rota-Baxter operators.
We end this paper by considering twisted generalized complex structures on modules over Lie algebras.
\end{abstract}

\medskip

\noindent {\sf 2010 MSC classification:} 17B56, 16T25, 17B37, 17B40.

\noindent {\sf Keywords:} Twisted Rota-Baxter operators, Reynolds operators, Cohomology, Deformations, NS-Lie \\ algebras.

\medskip


\thispagestyle{empty}


\vspace{0.2cm}

\section{Introduction}
Rota-Baxter operators and more generally $\mathcal{O}$-operators on Lie algebras was first appeared in a paper of B. A. Kupershmidt \cite{kuper} as an operator analogue of classical $r$-matrices and Poisson structures. However, Rota-Baxter operators have origin in the work of G.-C. Rota \cite{rota} and G. Baxter \cite{baxter} in fluctuation theory of probability and combinatorics. They have important applications in the algebraic aspects of the renormalization in quantum field theory \cite{conn}. Rota-Baxter operators on Lie algebras (resp. associative algebras) are also related to the splitting of algebras, specifically with pre-Lie algebras (resp. dendriform algebras). Recently, deformations of Rota-Baxter operators on Lie algebras has been carried out in \cite{tang,laza-new}, and has been extended to the case of associative algebras in \cite{das-rota}. 

\medskip

The notion of twisted Poisson structures was introduced by P. \v{S}evera and A. Weinstein \cite{sev-wein} who showed that twisted Poisson structures can be described by Dirac structures in a Courant algebroid. C. Klim\v{c}\'{i}k and T. Strobl \cite{klimcik-strol} also studied twisted Poisson structures from geometric points of view. Motivated by this, K. Uchino \cite{uchino} defined a notion of twisted Rota-Baxter operator in the context of associative algebras as an operator analogue twisted Poisson structures and find relations with NS-algebras of P. Leroux \cite{leroux}. 

\medskip

Our primary aim in this paper is to introduce and study twisted Rota-Baxter operators on Lie algebras. Let $H$ be a $2$-cocycle in the Chevally-Eilenberg complex of a Lie algebra $\mathfrak{g}$ with coefficients in a representation $M$. A $H$-twisted Rota-Baxter operator is a linear map $T : M \rightarrow \mathfrak{g}$ that satisfies the Rota-Baxter type identity modified by the $2$-cocycle $H$ (see Definition \ref{defn-h-tw}).  The inverse of a $1$-cochain $h : \mathfrak{g} \rightarrow M$ (when $h$ is invertible) is a $H$-twisted Rota-Baxter operator with $H = - \delta_{\mathrm{CE}} h$. Here $\delta_{\mathrm{CE}}$ is the Chevalley-Eilenberg differential. 
If $r \in \wedge^2 \mathfrak{g}$ is a twisted $r$-matrix, then $r^\sharp : \mathfrak{g}^* \rightarrow \mathfrak{g}$ is a twisted Rota-Baxter operator (see Example \ref{exam-t-p}).
We provide some other examples including the one that arises from a Nijenhuis operator on a Lie algebra. 
Particular classes of twisted Rota-Baxter operators are given by Reynolds operators, which were first introduced in the associative algebra context (see \cite{zhang-gao-guo} for the literature). Reynolds operators are shown to be closely related with derivations on the Lie algebra.

\medskip 

In \cite{tang} the authors construct a graded Lie algebra using Voronov's derived bracket \cite{voro}, whose Maurer-Cartan elements are Rota-Baxter operators on Lie algebras. We introduce a ternary bracket (associated to a $2$-cocycle $H$) on the underlying graded vector space which makes it an $L_\infty$-algebra. The Maurer-Cartan elements of this $L_\infty$-algebra are precisely $H$-twisted Rota-Baxter operators. This allows us to define cohomology of a $H$-twisted Rota-Baxter operator. On the other hand, given a $H$-twisted Rota-Baxter operator, we construct a Lie algebra structure on $M$ and a representation of it on the vector space $\mathfrak{g}$. The corresponding Chevalley-Eilenberg cohomology is shown to be isomorphic with the cohomology of $T$.

\medskip

The classical formal deformation theory of Gerstenhaber \cite{gers} has been extended to Rota-Baxter operators on Lie algebras in \cite{tang}. We apply the same approach to $H$-twisted Rota-Baxter operators. We show that the infinitesimal in a deformation of $T$ is a $1$-cocycle in the cohomology of $T$. Moreover, we define a notion of equivalence between two formal deformations of $T$. The infinitesimals corresponding to equivalent deformations are shown to be cohomologous. We introduce Nijenhuis elements associated with a $H$-twisted Rota-Baxter operator. Such Nijenhuis elements are obtained from trivial linear deformations. We also find a sufficient condition for the rigidity of a $H$-twisted Rota-Baxter operator in terms of Nijenhuis elements.


\medskip

In the next, we introduce a new algebraic structure consisting of two binary operations, the second of which is skew-symmetric and these two operations are supposed to satisfy two new identities. We call such algebras as NS-Lie algebras. They are a generalization of pre-Lie algebras and a skew-symmetric analogue of (associative) NS-algebras introduced by Leroux \cite{leroux}. An NS-Lie algebra naturally associates a Lie algebra structure, called the adjacent Lie algebra. We give some examples of NS-Lie algebras, specially obtained from Nijenhuis operators, (associative) NS-algebras and $H$-twisted Rota-Baxter operators. We show the category of twisted Rota-Baxter operators and the category of NS-Lie algebras are equivalent. More details study of NS-Lie algebras and their adjacent Lie algebras will be treated in a subsequent paper.

\medskip

Finally, inspired by the generalized complex structures \cite{gual,hitchin} on differential geometry, we introduce $H$-twisted generalized complex structures on modules over Lie algebras. Such a structure has an underlying Rota-Baxter operator. Motivated from the explicit description of a generalized complex structure, we obtain the explicit form of a twisted generalized complex structure on a module over a Lie algebra. Some examples are also given.

\medskip

The paper is organized as follows. In the next section (section \ref{sec-2}), we introduce $H$-twisted Rota-Baxter operators and Reynolds operators on Lie algebras. In section \ref{sec-mc}, we consider an $L_\infty$-algebra whose Maurer-Cartan elements are given by $H$-twisted Rota-Baxter operators. This motivates us to define cohomology of a $H$-twisted Rota-Baxter operator $T:M \rightarrow \mathfrak{g}$. We show that a $H$-twisted Rota-Baxter operator $T$ induces a Lie algebra structure on $M$ which has representation on $\mathfrak{g}$. The corresponding Chevalley-Eilenberg cohomology is shown to be isomorphic with the cohomology of $T$. Deformations of $H$-twisted Rota-Baxter operators from cohomological points of view has been treated in section \ref{sec-4}. Some applications to Reynolds operators and twisted $r$-matrices are given in section \ref{sec-app}. We introduce NS-Lie algebras and obtain their relation with Nijenhuis operators, (associative) NS-algebras and $H$-twisted Rota-Baxter operators in section \ref{sec-ns}. Finally, in section \ref{sec-tgcs}, we consider twisted generalized complex structures on modules over Lie algebras.

\medskip

All (graded) vector spaces, linear maps, tensor products and wedge products are over a field $\mathbb{K}$ of characteristic zero.

\section{Twisted Rota-Baxter operators and Reynolds operators on Lie algebras}\label{sec-2}

Let $(\mathfrak{g}, [~,~])$ be a Lie algebra. A representation of $\mathfrak{g}$ consists of a vector space $M$ together with a bilinear map $\bullet : \mathfrak{g} \times M \rightarrow M$, $(x, m) \mapsto x \bullet m$ satisfying the following
\begin{align*}
[x,y] \bullet u = x \bullet (y \bullet u ) - y \bullet ( x \bullet u),~ \text{ for } x, y \in \mathfrak{g}, u \in M.
\end{align*}

Given a Lie algebra $\mathfrak{g}$ and a representation $M$, the Chevalley-Eilenberg cohomology of $\mathfrak{g}$ with coefficients in $M$ is given by the cohomology of the cochain complex $(\{ C^n_{\mathrm{CE}}  (\mathfrak{g}, M) \}_{n \geq 0}, \delta_{\mathrm{CE}} )$, where $C^n_{\mathrm{CE}}(\mathfrak{g}, M) = \mathrm{Hom}(\wedge^n \mathfrak{g}, M )$, for $n \geq 0$ and the differential $\delta_{\mathrm{CE}} : C^n_{\mathrm{CE}}(\mathfrak{g}, M) \rightarrow C^{n+1}_{\mathrm{CE}}(\mathfrak{g}, M)$ given by
\begin{align*}
(\delta_{\mathrm{CE}} f)(x_1, \ldots, x_{n+1}) =~& \sum_{i=1}^{n+1} (-1)^{i+1}~ x_i \bullet f (x_1, \ldots, \widehat{x_i}, \ldots, x_{n+1})\\
& + \sum_{1 \leq i <j \leq n+1} (-1)^{i+j} f ([x_i, x_j ], x_1, \ldots, \widehat{x_i}, \ldots, \widehat{x_j}, \ldots, x_{n+1}).
\end{align*}  
The corresponding cohomology groups are denoted by $H^n_{\mathrm{CE}}(\mathfrak{g}, M)$, for $n \geq 0$.

Let $H \in C^2_{\mathrm{CE}}(\mathfrak{g}, M)$ be a $2$-cocycle, i.e. $H : \mathfrak{g} \times \mathfrak{g} \rightarrow M$ is a skew-symmetric bilinear map satisfing
\begin{align*}
x \bullet H (y, z) + y \bullet H (z,x) + z \bullet H (x, y) + H (x, [y,z]) + H (y, [z,x]) + H (z, [x,y]) = 0,~ \text{ for } x,y, z \in \mathfrak{g}.
\end{align*}

\begin{defn}\label{defn-h-tw}
A linear map $T: M \rightarrow \mathfrak{g}$ is said to a {\bf $H$-twisted Rota-Baxter operator} if $T$ satisfies
\begin{align}\label{iden-H-twisted}
[T(u), T(v)] = T ( T(u) \bullet v - T(v) \bullet u + H (Tu, Tv)),~ \text{ for } u, v \in M.
\end{align}
\end{defn}

Let $\mathfrak{g}$ be a Lie algebra and $M$ be a $\mathfrak{g}$-module. For any $2$-cocycle $H$, the direct sum $\mathfrak{g} \oplus M$ carries a Lie algebra structure given by
\begin{align}\label{semi-dir-brkt}
[(x,u), (y, v)]^H = ([x,y], x \bullet v - y \bullet u + H(x, y)).
\end{align}
This is called the $H$-twisted semi-direct product, denoted by $\mathfrak{g} \ltimes_H M$.

\begin{prop}\label{graph-twisted}
A linear map $T: M \rightarrow \mathfrak{g}$ is a $H$-twisted Rota-Baxter operator if and only if the graph $\mathrm{Gr}(T) = \{ (Tu, u) | ~ u \in M \}$ is a subalgebra of the $H$-twisted semi-direct product $\mathfrak{g} \ltimes_H M$.
\end{prop}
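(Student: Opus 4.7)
The plan is to prove this by direct computation: the graph $\mathrm{Gr}(T)$ is automatically a linear subspace of $\mathfrak{g} \oplus M$ isomorphic to $M$ via the projection onto the second factor, so being a subalgebra reduces to showing that the bracket of two graph elements stays in the graph. Since an element $(x,u) \in \mathfrak{g} \oplus M$ lies in $\mathrm{Gr}(T)$ if and only if $x = Tu$, closure is controlled entirely by comparing the first component of the bracket with $T$ applied to the second component.

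Concretely, I would take two arbitrary elements $(Tu, u)$ and $(Tv, v)$ in $\mathrm{Gr}(T)$ and expand using the definition \eqref{semi-dir-brkt} of the twisted semidirect bracket:
\begin{align*}
[(Tu, u), (Tv, v)]^H = \bigl([Tu, Tv],\; Tu \bullet v - Tv \bullet u + H(Tu, Tv)\bigr).
\end{align*}
This element lies in $\mathrm{Gr}(T)$ if and only if its first slot equals $T$ applied to its second slot, that is, if and only if
\begin{align*}
[Tu, Tv] = T\bigl(Tu \bullet v - Tv \bullet u + H(Tu, Tv)\bigr).
\end{align*}
But this is exactly identity \eqref{iden-H-twisted} from Definition \ref{defn-h-tw}.

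For the forward direction, assuming $T$ is a $H$-twisted Rota-Baxter operator, the displayed equality holds for all $u,v \in M$, so the bracket of any two graph elements again has the form $(Tw, w)$ with $w = Tu \bullet v - Tv \bullet u + H(Tu, Tv)$, proving $\mathrm{Gr}(T)$ is a subalgebra. Conversely, if $\mathrm{Gr}(T)$ is a subalgebra, then for each pair $u, v$ the bracket $[(Tu,u),(Tv,v)]^H$ lies in $\mathrm{Gr}(T)$, which forces the same equation and hence the defining identity of a $H$-twisted Rota-Baxter operator. There is no real obstacle here; the statement is essentially a tautological reformulation, and the only thing to be careful about is to verify the bracket \eqref{semi-dir-brkt} indeed defines a Lie algebra structure on $\mathfrak{g} \oplus M$ (using the $2$-cocycle condition on $H$), which justifies calling $\mathrm{Gr}(T)$ a \emph{subalgebra} in the first place.
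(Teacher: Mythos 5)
Your proof is correct and is exactly the direct graph computation the paper intends (the paper states this proposition without proof, treating it as immediate): the second component of $[(Tu,u),(Tv,v)]^H$ is $Tu\bullet v - Tv\bullet u + H(Tu,Tv)$, and membership in $\mathrm{Gr}(T)$ is precisely identity \eqref{iden-H-twisted}. Your closing remark that the $2$-cocycle condition on $H$ is what makes $[~,~]^H$ a Lie bracket, so that ``subalgebra'' is meaningful, is a worthwhile point the paper also asserts just before the proposition.
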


\begin{remark}
Rota-Baxter operators on Lie algebras generalize classical {\bf r}-matrices \cite{kuper}, hence generalize Poisson structures on a manifold. This can be viewed by the following similarities between Rota-Baxter operators and Poisson structures. A linear map $T : M \rightarrow \mathfrak{g}$ is a Rota-Baxter operator if and only if $\mathrm{Gr}(T) \subset \mathfrak{g} \oplus M$ is a subalgebra of the semi-direct product, on the other hand, a bivector field $\pi \in \Gamma (\wedge^2 TM)$ on a manifold is a Poisson tensor if and only if the graph $\mathrm{Gr}(\pi^\sharp)$ of the bundle map $\pi^\sharp : T^*M \rightarrow TM$ is closed under the Courant bracket $[~,~]_{\mathrm{Cou}}$ on $TM \oplus T^*M$. In \cite{sev-wein} \v{S}evera and Weinstein introduced a $H$-twisted Courant bracket $[~,~]_{\mathrm{Cou}}^H$ on $TM \oplus T^*M$, for any closed $3$-form $H \in \Omega^3_{\mathrm{cl}}(M)$. A bivector field $\pi$ is called a $H$-twisted Poisson structure if $\mathrm{Gr}(\pi^\sharp)$  is closed under the $H$-twisted Courant bracket $[~,~]_{\mathrm{Cou}}^H$. In this regard, the notion of $H$-twisted Rota-Baxter operators are generalization of $H$-twisted Poisson structures.
\end{remark}

The following result is a consequence of Proposition \ref{graph-twisted}.

\begin{prop}\label{graph-twisted-new}
Let $T : M \rightarrow \mathfrak{g}$ be a $H$-twisted Rota-Baxter operator. Then $M$ carries a Lie algebra structure with bracket
\begin{align*}
[u, v]_T := T(u) \bullet v - T(v) \bullet u + H (Tu, Tv),~ \text{ for } u, v \in M.
\end{align*}
\end{prop}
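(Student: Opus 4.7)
The plan is to deduce this directly from Proposition \ref{graph-twisted} by transporting the Lie bracket from the graph $\mathrm{Gr}(T)$ to $M$ along an obvious linear isomorphism, so essentially no fresh Jacobi calculation on $M$ is needed.

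First, I would observe that the map $\varphi : M \rightarrow \mathrm{Gr}(T)$, $u \mapsto (Tu, u)$, is a linear isomorphism (surjective by definition of the graph, and injective because the second component recovers $u$). By Proposition \ref{graph-twisted}, $\mathrm{Gr}(T) \subset \mathfrak{g} \ltimes_H M$ is a Lie subalgebra, so it inherits a Lie bracket from \eqref{semi-dir-brkt}.

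Next, I would compute the induced bracket on $M$ via $\varphi$. For $u, v \in M$, the bracket in $\mathfrak{g} \ltimes_H M$ gives
\begin{align*}
[(Tu, u), (Tv, v)]^H = \bigl([Tu, Tv],~ Tu \bullet v - Tv \bullet u + H(Tu, Tv)\bigr).
\end{align*}
Applying the $H$-twisted Rota-Baxter identity \eqref{iden-H-twisted} to the first component shows this equals
\begin{align*}
\bigl(T(Tu \bullet v - Tv \bullet u + H(Tu, Tv)),~ Tu \bullet v - Tv \bullet u + H(Tu, Tv)\bigr) = (T[u,v]_T, [u,v]_T) = \varphi([u,v]_T),
\end{align*}
which in particular re-confirms that $\mathrm{Gr}(T)$ is closed under the bracket and shows that the pullback bracket on $M$ is exactly $[~,~]_T$.

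Finally, since $\varphi$ is a linear isomorphism intertwining $[~,~]_T$ with the restriction of $[~,~]^H$ to the subalgebra $\mathrm{Gr}(T)$, the skew-symmetry and the Jacobi identity for $[~,~]_T$ are immediate from the corresponding properties of $[~,~]^H$. There is no real obstacle here; the only thing to be a bit careful about is the direction of the argument: one should not attempt a direct Jacobi verification on $M$ (which would require repeatedly invoking the $2$-cocycle condition on $H$ and the $H$-twisted Rota-Baxter identity), but instead push everything to the graph where the Lie structure is already available from Proposition \ref{graph-twisted}.
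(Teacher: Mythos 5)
Your argument is correct and is exactly the route the paper takes: the statement is presented there as a direct consequence of Proposition \ref{graph-twisted}, obtained by transporting the bracket of the subalgebra $\mathrm{Gr}(T) \subset \mathfrak{g} \ltimes_H M$ back to $M$ along $u \mapsto (Tu, u)$. No further comparison is needed.
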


\begin{defn}\label{rota-morphism}
Let $T : M \rightarrow \mathfrak{g}$ be a $H$-twisted Rota-Baxter operator. Suppose $\mathfrak{g}'$ is another Lie algebra, $M'$ is a $\mathfrak{g}'$-module and $H' \in C^2_{\mathrm{CE}} (\mathfrak{g}', M')$ is a $2$-cocycle. Let $T' : M' \rightarrow \mathfrak{g}'$ be a $H'$-twisted Rota-Baxter operator. A morphism of twisted Rota-Baxter operators from $T$ to $T'$ consists of a pair $(\phi, \psi)$ of a Lie algebra morphism $\phi : \mathfrak{g} \rightarrow \mathfrak{g}'$ and a linear map $\psi : M \rightarrow M'$ satisfying
\begin{align*}
\psi ( x \bullet u ) = \phi (x) \bullet \psi (u), \quad \psi \circ H = H' \circ (\phi \otimes \phi) ~~~ \text{ and } ~~~ \phi \circ T = T' \circ \psi, ~ \text{ for } x \in \mathfrak{g}, u \in M.
\end{align*}
\end{defn}

\begin{exam}
Any Rota-Baxter operator of weight $0$ (more generally any $\mathcal{O}$-operator or Kuperschmidt operator) on a Lie algebra is a $H$-twisted Rota-Baxter operator with $H = 0$.
\end{exam}

\begin{exam}
Let $\mathfrak{g}$ be a Lie algebra and $M$ be a $\mathfrak{g}$-module. Suppose $h : \mathfrak{g} \rightarrow M$ is an invertible $1$-cochain in the Chevalley-Eilenberg cochain complex of $\mathfrak{g}$ with coefficients in $M$. Then $T = h^{-1} : M \rightarrow \mathfrak{g}$ is a $H$-twisted Rota-Baxter operator with $H = - \delta_{\mathrm{CE}} h$. To verify this, we observe that
\begin{align}\label{exam-1-co}
H(Tu, Tv) = - (\delta_{\mathrm{CE}} h )(Tu, Tv) = - T(u)\bullet v + T(v) \bullet u + h ([Tu,Tv]).
\end{align}
By applying $T$ to both sides of (\ref{exam-1-co}), we get the identity (\ref{iden-H-twisted}).
\end{exam}

\begin{exam}
Let $\mathfrak{g}$ be a Lie algebra with Lie bracket given by the map $\mu : \wedge^2 \mathfrak{g} \rightarrow \mathfrak{g}$. Note that the space $M = \wedge^2 \mathfrak{g}$ is a $\mathfrak{g}$-module by $[x, y \wedge z] = [x,y]\wedge z + y \wedge [x,z]$, for $x \in \mathfrak{g}$ and $y \wedge z \in \wedge^2 \mathfrak{g}$. Moreover, the map $H : \wedge^2 \mathfrak{g} \rightarrow \wedge^2 \mathfrak{g}$, $y \wedge z \mapsto - y \wedge z$ is a $2$-cocycle in the Chevalley-Eilenberg cohomology of $\mathfrak{g}$ with coefficients in $\wedge^2 \mathfrak{g}$. With this notation, the map $\mu : \wedge^2 \mathfrak{g} \rightarrow \mathfrak{g}$ is a $H$-twisted Rota-Baxter operator.
\end{exam}

\begin{exam}\label{exam-n-tw}
Let $N : \mathfrak{g} \rightarrow \mathfrak{g}$ be a Nijenhuis operator on a Lie algebra $\mathfrak{g}$, i.e. $N$ satisfies
\begin{align*}
[Nx, Ny] = N ( [Nx, y] + [x, Ny] - N [x,y]),~ \text{ for } x, y \in \mathfrak{g}.
\end{align*}
In this case $\mathfrak{g}$ carries a new Lie bracket $[x,y]_N = [Nx, y] + [x, Ny] - N[x,y]$, for $x, y \in \mathfrak{g}$. We denote this Lie algebra structure by $\mathfrak{g}_N$. Moreover, the Lie algebra $\mathfrak{g}_N$ has a representation on $\mathfrak{g}$ by $x \bullet y := [Nx, y]$, for $x \in \mathfrak{g}_N$, $y \in \mathfrak{g}$. With this representation, the map $H : \wedge^2 \mathfrak{g}_N \rightarrow \mathfrak{g}$, $H (x, y) = - N [x, y]$ is a $2$-cocycle in the Chevalley-Eilenberg cohomology of $\mathfrak{g}_N$ with coefficients in $\mathfrak{g}$. Then it is easy to observe that the identity map $\mathrm{id} : \mathfrak{g} \rightarrow \mathfrak{g}_N$ is a $H$-twisted Rota-Baxter operator.

This example will be more clear in Section  \ref{sec-ns} when we will introduce NS-Lie algebras and a functor from the category of NS-Lie algebras to the category of twisted Rota-Baxter operators.
\end{exam}

\begin{exam}\label{exam-t-p} (Twisted triangular $r$-matrix)
Let $\mathfrak{g}$ be a Lie algebra and $\psi \in \wedge^3 \mathfrak{g}^*$ be a $3$-cocycle of $\mathfrak{g}$ with coefficients in $\mathbb{K}$. Then an element ${\bf r} \in \wedge^2 \mathfrak{g}$ is called a $\psi$-twisted triangular $r$-matrix if ${\bf r}$ satisfies
\begin{align}\label{tw-eqn}
[{\bf r}_{12}, {\bf r}_{13}] + [{\bf r}_{12}, {\bf r}_{23}] + [{\bf r}_{13}, {\bf r}_{23}] = - (\wedge^3 {\bf r}^\sharp )(\psi),
\end{align}
where ${\bf r}^\sharp : \mathfrak{g}^* \rightarrow \mathfrak{g},~ \alpha \mapsto {\bf r}( \alpha, ~ )$ is the map induced by ${\bf r}$. The equation (\ref{tw-eqn}) is called the {\em $\psi$-twisted classical Yang-Baxter equation}. Note that twisted $r$-matrices are Lie algebraic version of twisted Poisson structures \cite{sev-wein}.

Observe that, $\psi \in \wedge^3 \mathfrak{g}^*$ is a $3$-cocycle in the cohomology of $\mathfrak{g}$ with coefficients in $\mathbb{K}$ imples that $\psi^\sharp : \wedge^2 \mathfrak{g} \rightarrow \mathfrak{g}^*$ is a $2$-cocycle in the cohomology of $\mathfrak{g}$ with coefficients in the coadjoint representation $\mathfrak{g}^*$. With this notation, an element ${\bf r} \in \wedge^2 \mathfrak{g}$ is a $\psi$-twisted triangular $r$-matrix if and only if ${\bf r}^\sharp : \mathfrak{g}^* \rightarrow \mathfrak{g}$ is a $\psi^\sharp$-twisted Rota-Baxter operator on $\mathfrak{g}^*$ over the Lie algebra $\mathfrak{g}$.

Let ${\bf r } \in \wedge^2 \mathfrak{g}$ be a $\psi$-twisted triangular $r$-matrix. Then it has been observed in \cite{yks-yaki} that $\mathfrak{g}^*$ carries a Lie algebra structure with bracket
\begin{align}\label{tw-bracket}
[\alpha, \beta]_{{\bf r}, \psi} := ad^*_{{\bf r}^\sharp (\alpha)} \beta - ad^*_{{\bf r}^\sharp (\beta)} \alpha + \psi ( {\bf r}^\sharp \alpha, {\bf r}^\sharp \beta, ~),~ \text{ for } \alpha, \beta \in \mathfrak{g}^*.
\end{align}
Moreover, the map ${\bf r}^\sharp : \mathfrak{g}^* \rightarrow \mathfrak{g}$ is a Lie algebra morphism, equivalently, 
$[{\bf r}^\sharp \alpha, {\bf r}^\sharp \beta ] = {\bf r}^\sharp ([\alpha, \beta ]_{{\bf r}, \psi})$, for $\alpha, \beta \in \mathfrak{g}^*$. 
\end{exam}

Given a $H$-twisted Rota-Baxter operator $T$ and a $1$-cochain $h$, here we construct a $(H + \delta h)$-twisted Rota-Baxter operator under certain condition. First we observe the following.

\begin{prop}\label{coboun}
Let $\mathfrak{g}$ be a Lie algebra and $M$ be a $\mathfrak{g}$-module. For any $2$-cocycle $H \in C^2_{\mathrm{CE}} (\mathfrak{g}, M)$ and $1$-cochain $h \in C^1_{\mathrm{CE}} ( \mathfrak{g}, M)$, we have isomorphism of Lie algebras
\begin{align*}
\mathfrak{g} \ltimes_H M ~\cong~ \mathfrak{g} \ltimes_{H + \delta h} M. 
\end{align*}
\end{prop}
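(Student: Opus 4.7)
The plan is to exhibit an explicit linear isomorphism $\Phi \colon \mathfrak{g} \oplus M \to \mathfrak{g} \oplus M$ and show it intertwines the two bracket structures. The natural candidate, motivated by the fact that $h$ twists the second factor, is
\[
\Phi(x, u) \;=\; (x,\, u - h(x)), \qquad x \in \mathfrak{g},\ u \in M.
\]
This is clearly a linear bijection whose inverse is $\Phi^{-1}(x, u) = (x, u + h(x))$, so the only content is to check compatibility with brackets.

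First I would recall the Chevalley--Eilenberg coboundary formula
\[
(\delta_{\mathrm{CE}} h)(x, y) \;=\; x \bullet h(y) - y \bullet h(x) - h([x, y]),
\]
which is the only place the hypothesis on $h$ enters. Then I would compute both sides of $\Phi([(x,u),(y,v)]^H) = [\Phi(x,u), \Phi(y,v)]^{H+\delta_{\mathrm{CE}} h}$ using the definition \eqref{semi-dir-brkt}. The left-hand side unpacks to
\[
\bigl([x,y],\ x \bullet v - y \bullet u + H(x,y) - h([x,y])\bigr),
\]
while the right-hand side expands, using bilinearity of $\bullet$, to
\[
\bigl([x,y],\ x \bullet v - x \bullet h(y) - y \bullet u + y \bullet h(x) + H(x,y) + (\delta_{\mathrm{CE}} h)(x,y)\bigr).
\]
Substituting the formula for $\delta_{\mathrm{CE}} h$ into the right-hand side cancels the $\pm x \bullet h(y)$ and $\mp y \bullet h(x)$ contributions and leaves precisely the left-hand side.

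No nontrivial obstacle is expected: the only place one could go wrong is in keeping the signs consistent, so the main care is to make sure that $\Phi$ is taken with the minus sign (the map $(x, u) \mapsto (x, u + h(x))$ would instead be an isomorphism in the opposite direction, sending $\mathfrak{g} \ltimes_{H + \delta_{\mathrm{CE}} h} M$ to $\mathfrak{g} \ltimes_H M$). It is worth noting that the $2$-cocycle condition on $H$ is not used in the argument; it is only used implicitly to ensure that both sides are genuine Lie algebras, and the same computation in fact shows that $H$ and $H + \delta_{\mathrm{CE}} h$ give isomorphic (not necessarily Lie) brackets on $\mathfrak{g} \oplus M$ for any $h$.
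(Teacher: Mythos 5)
Your proposal is correct and is essentially identical to the paper's proof: the paper uses the very same map $\Psi_h(x,u) = (x, u - h(x))$ and verifies the bracket compatibility by the same substitution of the Chevalley--Eilenberg coboundary formula. Your closing remark that the $2$-cocycle condition on $H$ plays no role in the computation is accurate but does not change the argument.
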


\begin{proof}
Define $\Psi_h : \mathfrak{g} \ltimes_H M \rightarrow \mathfrak{g} \ltimes_{H + \delta h} M$ by $\Psi_h (x, u) = (x, u - h (x))$, for $(x, u) \in \mathfrak{g} \oplus M$. Then we have
\begin{align*}
\Psi_h [ (x, u), (y, v)]^H =~& ( [x, y] , x \bullet v - y \bullet u + H (x, y) - h [x, y]) \\
=~& ( [x, y], x \bullet v - y \bullet u + H (x, y) - x \bullet h (y) + y \bullet h(x) + (\delta h) (x, y)) \\
=~& [(x, u - h(x)), ( y , v - h(y))]^{H + \delta h} \\
=~& [ \Psi_h (x, u), \Psi_h (y, v)]^{H + \delta h}.
\end{align*}
This proves the result.
\end{proof}

Let $T : M \rightarrow \mathfrak{g}$ be a $H$-twisted Rota-Baxter operator and $h \in C^1_{\mathrm{CE}} (\mathfrak{g}, M)$
 be a $1$-cochain. Consider the subalgebra $\mathrm{Gr}(T) \subset \mathfrak{g} \ltimes_H M$ of the twisted semi-direct product. It follows from Proposition \ref{coboun} that $\Psi_h ( \mathrm{Gr}(T)) = \{ (Tu, u - hT(u)) | u \in M \} \subset \mathfrak{g} \ltimes_{H + \delta h} M$ is a subalgebra. If the linear map $(\mathrm{id} - h \circ T) : M \rightarrow M$ is invertible, then $\Psi_h ( \mathrm{Gr}(T))$ is the graph of the linear map $T (\mathrm{id} -  h \circ T)^{-1}$. In this case, it follows from Proposition \ref{graph-twisted} that $T (\mathrm{id} -  h \circ T)^{-1}$ is a $(H + \delta h)$-twisted Rota-Baxter operator.

\medskip

In the next, we give a construction of a new $H$-twisted Rota-Baxter operator out of an old one and a suitable $1$-cocycle. Let $T:M \rightarrow \mathfrak{g}$ be a $H$-twisted Rota-Baxter operator. Consider the graph $\mathrm{Gr}(T) = \{ (Tu, u) | u \in M \} \subset \mathfrak{g} \ltimes_H M$ which is a subalgebra of the $H$-twisted semi-direct product.

Let $B : \mathfrak{g} \rightarrow M$ be a $1$-cocycle in the Chevalley-Eilenberg cohomology of $\mathfrak{g}$ with coefficients in $M$. Since $B$ is a $1$-cocycle, the deformed subspace
\begin{align*}
\tau_B ( \mathrm{Gr}(T)) = \{ (Tu, u + BT (u))|~u \in M \} \subset \mathfrak{g} \ltimes_H M
\end{align*}
is still a subalgebra. This subalgebra may not be the graph of a linear map from $M$ to $\mathfrak{g}$. However, if the linear map $(\mathrm{id}+ B \circ T ) : M \rightarrow M$ is invertible, then $\tau_B ( \mathrm{Gr}(T))$ is the graph of the linear map $T (\mathrm{id}+ B \circ T )^{-1} : M \rightarrow \mathfrak{g}$. In this case, we say that $B$ is $T$-admissible $1$-cocycle. Then it follows from Proposition \ref{graph-twisted} that $T (\mathrm{id}+ B \circ T )^{-1}$ is a $H$-twisted Rota-Baxter operator. This is called the gauge transformation of $T$ associated with $B$. We denote this $H$-twisted Rota-Baxter operator by $T_B$.

\begin{prop}
Let $T$ be a $H$-twisted Rota-Baxter operator and $B$ be a $T$-admissible $1$-cocycle. Then the Lie algebra structures on $M$ induced by $T$ and $T_B$ are isomorphic.
\end{prop}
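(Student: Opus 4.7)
The natural candidate for the isomorphism is
\[
\varphi := \mathrm{id} + B \circ T : M \longrightarrow M,
\]
which is a linear isomorphism by the $T$-admissibility assumption. My plan is to show that $\varphi$ is in fact a morphism of Lie algebras from $(M,[\,,\,]_T)$ to $(M,[\,,\,]_{T_B})$.

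The first step is the key intertwining identity
\[
T_B \circ \varphi = T \circ (\mathrm{id} + B \circ T)^{-1} \circ (\mathrm{id} + B \circ T) = T,
\]
so that $T_B\varphi(u) = Tu$ and $T_B\varphi(v) = Tv$ for all $u,v \in M$. Plugging this into the bracket on $M$ induced by $T_B$ (Proposition~\ref{graph-twisted-new}) and expanding $\varphi(u)=u+BT(u)$, $\varphi(v)=v+BT(v)$, a short calculation gives
\[
[\varphi(u),\varphi(v)]_{T_B} \;=\; [u,v]_T + \bigl(Tu \bullet BT(v) - Tv \bullet BT(u)\bigr).
\]
On the other hand,
\[
\varphi([u,v]_T) = [u,v]_T + BT([u,v]_T),
\]
so the problem reduces to verifying
\[
BT([u,v]_T) \;=\; Tu \bullet BT(v) - Tv \bullet BT(u).
\]

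For this, the second step uses that $T$ is a $H$-twisted Rota-Baxter operator, which gives $T([u,v]_T) = [Tu, Tv]$ directly from Definition~\ref{defn-h-tw}. Hence the left-hand side equals $B([Tu,Tv])$. The third and final step invokes the hypothesis that $B$ is a $1$-cocycle: $(\delta_{\mathrm{CE}} B)(Tu,Tv)=0$ is precisely
\[
Tu \bullet B(Tv) - Tv \bullet B(Tu) - B([Tu,Tv]) = 0,
\]
which gives the required identity. Combining the three steps shows $\varphi$ preserves brackets, and since $\varphi$ is bijective by hypothesis, it is an isomorphism of Lie algebras. There is no real obstacle here: the only substantive point is recognizing that the natural map $\varphi$ satisfies $T_B\circ\varphi = T$, after which the $H$-twisted Rota-Baxter identity and the cocycle equation for $B$ fit together exactly.
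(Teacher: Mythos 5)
Your proposal is correct and follows essentially the same route as the paper: the isomorphism is $\mathrm{id}+B\circ T$, the computation uses $T_B\circ(\mathrm{id}+B\circ T)=T$ to expand $[\varphi(u),\varphi(v)]_{T_B}$, and the identification $Tu\bullet BT(v)-Tv\bullet BT(u)=B([Tu,Tv])=BT([u,v]_T)$ via the cocycle condition and the twisted Rota--Baxter identity is exactly the step the paper performs. No substantive differences.
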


\begin{proof}
Consider the linear isomorphism $\mathrm{id} + B \circ T : M \rightarrow M$. We have
\begin{align*}
&[(\mathrm{id} + B \circ T)(u), (\mathrm{id} + B \circ T)(v)]_{T_B} \\
&= T(u) \bullet (\mathrm{id} + B \circ T) (v) - T(v) \bullet ( \mathrm{id} + B \circ T) (u) + H (Tu, Tv) \\
&= T(u) \bullet v - T(v) \bullet u + T(u) \bullet BT(v) - T(v) \bullet BT(u) + H (Tu, Tv) \\
&= T(u) \bullet v - T(v) \bullet u + B ( [Tu, Tv]) + H (Tu, Tv) \\
&= [u, v]_T + BT ( [u, v]_T) = (\mathrm{id} + B \circ T) ( [u,v]_T).
\end{align*}
Hence it follows that $(\mathrm{id} + B \circ T) : (M, [~,~]_T) \rightarrow (M, [~,~]_{T_B})$ is a Lie algebra isomorphism.
\end{proof}

\subsection{Reynolds operators}

Given a Lie algebra $(\mathfrak{g}, [~,~])$, denote the Lie bracket on $\mathfrak{g}$ by the map $\mu : \wedge^2 \mathfrak{g} \rightarrow \mathfrak{g}$. Note that $\mu$ is a $2$-cocycle in the Chevalley-Eilenberg of $\mathfrak{g}$ with  coefficients in itself. A $(-\mu)$-twisted Rota-Baxter operator is called a Reynolds operator on the Lie algebra $\mathfrak{g}$.

\begin{defn}
Let $(\mathfrak{g}, [~,~])$ be a Lie algebra. A Reynolds operator on $\mathfrak{g}$ is a linear map $R : \mathfrak{g} \rightarrow \mathfrak{g}$ satisfying
\begin{align}\label{reynolds-iden}
[Rx, Ry] = R ([Rx, y] + [x, Ry] - [Rx,Ry]),~ \text{ for } x, y \in \mathfrak{g}.
\end{align}
\end{defn}

 It follows from Proposition \ref{graph-twisted-new} that a Reynolds operator $R: \mathfrak{g} \rightarrow \mathfrak{g}$ induces a new Lie algebra structure $\mathfrak{g}$ given by
\begin{align}\label{rey-new-brkt}
[x, y]_R :=  [Rx, y] + [ x, Ry] - [Rx, Ry],~ \text{ for } x, y \in \mathfrak{g}.
\end{align}

\begin{exam}
Let $W$ be the Witt algebra generated by basis elements $\{ l_n \}_{n \in \mathbb{Z}}$ and the Lie bracket given by
\begin{align*}
[l_m , l_n ] = (m-n)~ l_{m+n},~ \text{ for } m, n \geq 0.
\end{align*}
Note that $W_{\geq 0} = \mathrm{span}\{ l_n | n \geq 0 \}$ is a Lie subalgebra of $W$. 
Then the linear map $R: W_{\geq 0} \rightarrow W_{\geq 0}$ defined by $R(l_m) = \frac{1}{m+1}~ l_m$, for $m \geq 0$, is a Reynolds operator on $W_{\geq 0}$.
First observe that, for any $m, n \geq 0$,
\begin{align}\label{witt1}
[R(l_m), R(l_n)] = \frac{1}{(m+1)(n+1)} ~[l_m, l_n] = \frac{(m-n)}{(m+1)(n+1)} ~l_{m+n}.
\end{align}
On the other hand, 
\begin{align}\label{witt2}
&R  \big( [R(l_m), l_n ] + [ l_m , R (l_n) ] - [R(l_m), R(l_n)] \big) \nonumber \\
&= R \bigg(   \frac{m-n}{m+1} ~l_{m+n} + \frac{m-n}{n+1}~ l_{m+n} - \frac{m-n}{(m+1)(n+1)}~ l_{m+n} \bigg) \nonumber \\
&= \bigg(   \frac{m-n}{m+1}  + \frac{m-n}{n+1}  - \frac{m-n}{(m+1)(n+1)} \bigg) \frac{1}{m+n+1} ~ l_{m+n} = \frac{(m-n)}{(m+1)(n+1)} ~ l_{m+n}.
\end{align}
It follows from (\ref{witt1}) and (\ref{witt2}) that $R$ is a Reynolds operator on $W_{\geq 0}$. The induced Lie algebra structure on $W_{\geq 0}$ is given by
\begin{align*}
[l_m, l_n]_R := [R(l_m), l_n ] + [ l_m , R (l_n) ] - [R(l_m), R(l_n)] = \frac{(m-n)(m+n+1)}{(m+1)(n+1)} ~l_{m+n}.
\end{align*}
\end{exam}

Let $R : \mathfrak{g} \rightarrow \mathfrak{g}$ be a Reynolds operator on $\mathfrak{g}$ such that $R$ is invertible. Then it follows from (\ref{reynolds-iden}) that
\begin{align*}
R^{-1} [u, v ] = [ u, R^{-1} v] + [R^{-1} u, v] - [u, v], ~ \text{ for } u, v \in \mathfrak{g}
\end{align*}
which implies that $(R^{-1} - \mathrm{id})[u,v] = [(R^{-1} - \mathrm{id})(u), v] + [ u, (R^{-1} - \mathrm{id})(v)]$, for $u, v \in \mathfrak{g}$. This shows that $(R^{-1} - \mathrm{id}): \mathfrak{g} \rightarrow \mathfrak{g}$ is a Lie algebra derivation. Conversely, if $d$ is a derivation such that $(\mathrm{id} + d) : \mathfrak{g} \rightarrow \mathfrak{g}$ is invertible, then $(\mathrm{id} + d)^{-1}$ is a Reynolds operator. Even, if $(\mathrm{id} + d)$ is not invertible but the infinite sum $(\mathrm{id} + d)^{-1} = \sum_{n =0}^\infty (-1)^n d^n$ converges pointwise, then $(\mathrm{id} + d)^{-1}$ is a Reynolds operator. More precise statement is given below whose proof is similar to \cite[Proposition 2.8]{zhang-gao-guo}.

\begin{prop}
Let $\mathfrak{g}$ be a Lie algebra and $d : \mathfrak{g} \rightarrow \mathfrak{g}$ be a derivation. Suppose, for each $x \in \mathfrak{g}$, the infinite sum $\sum_{n =0}^\infty (-1)^n d^n (x)$  converges  to a element in $\mathfrak{g}$. Then $R = \sum_{n =0}^\infty (-1)^n d^n$ is a Reynolds operator on $\mathfrak{g}$.
\end{prop}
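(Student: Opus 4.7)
The plan is to realise $R$ as the pointwise inverse of $\mathrm{id}+d$, thereby reducing the proposition to the derivation-plus-invertible case already sketched in the paragraph immediately preceding the statement. The whole argument hinges on the single identity
$$ dR(x) = x - R(x) \qquad \text{for every } x \in \mathfrak{g}, $$
equivalently $(\mathrm{id}+d)\circ R = \mathrm{id}$, together with the companion $R\circ(\mathrm{id}+d)=\mathrm{id}$.

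First I would establish these by partial-sum telescoping. For the truncation $R_N = \sum_{n=0}^{N}(-1)^n d^n$ one checks directly that
$$ d R_N(x) = \sum_{n=0}^{N}(-1)^n d^{n+1}(x) = x - R_{N+1}(x), \qquad R_N(\mathrm{id}+d)(x) = x + (-1)^N d^{N+1}(x). $$
Passing to the limit $N\to\infty$: the right-hand side of the first equality converges to $x-R(x)$ by hypothesis; and since the general term of a convergent series tends to zero, $d^{N+1}(x)\to 0$, so the second yields $R\circ(\mathrm{id}+d)=\mathrm{id}$. For the first one needs to move $d$ past the limit to conclude $dR(x)=x-R(x)$, which is legitimate under the tacit assumption that $d$ is continuous with respect to the topology in which the series converges (the standing convention, as in \cite{zhang-gao-guo}).

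Once the identity $dR(u)=u-R(u)$ is in hand, the verification of \eqref{reynolds-iden} is the same short computation as in the invertible case noted before the proposition. Using that $d$ is a derivation,
$$ d[R(x),R(y)] = [dR(x),R(y)] + [R(x),dR(y)] = [x,R(y)] + [R(x),y] - 2[R(x),R(y)], $$
which rearranges to
$$ (\mathrm{id}+d)\,[R(x),R(y)] \;=\; [R(x),y] + [x,R(y)] - [R(x),R(y)]. $$
Applying $R$ to both sides and invoking $R\circ(\mathrm{id}+d)=\mathrm{id}$ produces exactly the Reynolds identity \eqref{reynolds-iden}.

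The only genuine obstacle is the interchange of $d$ with the infinite sum used to derive $(\mathrm{id}+d)R=\mathrm{id}$; all other steps are formal telescopings or applications of the derivation rule. Once this interchange is granted, in line with the convention adopted in \cite{zhang-gao-guo}, the proof is straightforward.
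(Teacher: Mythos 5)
Your proposal is correct and follows exactly the route the paper intends: it realises $R$ as a pointwise two-sided inverse of $\mathrm{id}+d$ via telescoping of partial sums and then repeats the short derivation-based computation sketched in the paragraph preceding the proposition (the paper itself omits the proof, deferring to \cite{zhang-gao-guo}, whose argument is the same). The interchange of $d$ with the infinite sum that you flag is likewise tacitly assumed in that convention (and is automatic in the locally nilpotent case), so no genuine gap remains.
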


It follows from the above proposition that if $d$ is a nilpotent derivation (more generally a locally nilpotent derivation) on $\mathfrak{g}$, then $R = \sum_{n=0}^\infty (-1)^n d^n$ is a Reynolds operator on $\mathfrak{g}$.

\section{Maurer-Cartan characterization of twisted Rota-Baxter \\ operators and cohomology}\label{sec-mc}
In this section, we construct an $L_\infty$-algebra whose Maurer-Cartan elements are $H$-twisted Rota-Baxter operators. This $L_\infty$-algebra consists of only bilinear and trilinear brackets, among which the bilinear bracket (which is constructed in \cite{tang} to characterize Rota-Baxter operators) itself satisfies the graded Jacobi identity. Such characterization of $H$-twisted Rota-Baxter operator $T$ allows us to introduce the cohomology of $T$. We also show that the cohomology of $T$ is equivalently described by the Chevalley-Eilenberg cohomology of $M$ with coefficients in a suitable representation of $\mathfrak{g}$.

\begin{defn} \cite{lada-markl,lada-stasheff}
An $L_\infty$-algebra is a graded vector space $L = \bigoplus_i L_i$ together with a collection $\{ l_k : L^{\otimes k} \rightarrow L |~ \mathrm{deg}(l_k) = 2-k \}_{k \geq 1}$ of multilinear maps satisfying the followings.
\begin{itemize}
\item[(i)] Skew-symmetry:~ $l_k ( x_{\sigma(1)} , \ldots, x_{\sigma (k)} ) = (-1)^\sigma \epsilon(\sigma) l_k ( x_1, \ldots, x_k),$  for $k \geq 1$ and $\sigma \in S_k$.
Here $\epsilon(\sigma) = \epsilon (\sigma; x_1, \ldots, x_k)$ is the Koszul sign.
\item[(ii)] Higher Jacobi identities: for each $n \geq 1$, we have
\begin{align*}
\sum_{i+j = n+1}^{} \sum_{\sigma}^{} (-1)^\sigma \epsilon (\sigma) ~ (-1)^{i (j-1)} ~ l_j   \big(  l_i (x_{\sigma (1)}, \ldots, x_{\sigma (i)}),  x_{\sigma (i+1)}, \ldots, x_{\sigma (n)} \big) = 0,
\end{align*}
where $\sigma$ runs over all $(i, n-i)$-unshuffles with $i \geq 1$.
\end{itemize}
\end{defn}

An element $\alpha \in L_1$ is said to be a Maurer-Cartan element (see for instance \cite{markl}) if it satisfies
\begin{align*}
l_1 (\alpha) + \frac{1}{2!} l_2 (\alpha, \alpha) - \frac{1}{3!} l_3 (\alpha, \alpha, \alpha ) - \cdots = 0.
\end{align*}



Next we recall the graded Lie algebra constructed in \cite{tang}. Let $\mathfrak{g}$ be a Lie algebra and $M$ be a $\mathfrak{g}$-module. Denote the Lie bracket on $\mathfrak{g}$ by the map $\mu : \wedge^2 \mathfrak{g} \rightarrow \mathfrak{g}$ and the $\mathfrak{g}$-module structure on $M$ by $\bullet : \mathfrak{g} \times M \rightarrow M$. Then the graded vector space  $\bigoplus_{n \geq 0} \mathrm{Hom}(\wedge^n M, \mathfrak{g})$ carries a graded Lie bracket given by
\begin{align*}
\llbracket P, Q \rrbracket := (-1)^p~ [[\mu + \bullet , P]_{\mathsf{NR}}, Q]_{\mathsf{NR}}, 
\end{align*}
for $P \in \mathrm{Hom}(\wedge^p M, \mathfrak{g})$, $Q \in \mathrm{Hom}( \wedge^q M, \mathfrak{g})$, and $[~, ~]_{\mathsf{NR}}$ is the Nijenhuis-Richardson bracket on skew-symmetric multilinear maps on the vector space $\mathfrak{g} \oplus M$. Explicitly, 
\begin{align}\label{l_2}
\llbracket P, Q \rrbracket (u_1, \ldots, u_{p+q})
=~& \sum_{\sigma \in Sh (q, 1, p-1)} (-1)^\sigma ~ P (  Q (u_{\sigma (1)} , \ldots, u_{\sigma (q)}) \bullet u_{\sigma (q+1)}, u_{\sigma (q+2)}, \ldots, u_{\sigma (p+q)} ) \\
~&- (-1)^{pq} \sum_{\sigma \in Sh (p, 1, q-1)} (-1)^\sigma ~ Q (  P (u_{\sigma (1)} , \ldots, u_{\sigma (p)}) \bullet u_{\sigma (p+1)}, u_{\sigma (p+2)}, \ldots, u_{\sigma (p+q)} ) \nonumber \\
~&+ (-1)^{pq} \sum_{Sh (p, q)} (-1)^\sigma [ P (u_{\sigma (1)} , \ldots, u_{\sigma (p)}), Q (u_{\sigma (n+1)}, \ldots, u_{\sigma (p+q)}) ]. \nonumber 
\end{align}
Hence, for any $T \in \mathrm{Hom}(M, \mathfrak{g})$,
\begin{align}\label{binary-t}
\llbracket T, T \rrbracket (u,v) = 2~ \{ T ( T(u) \bullet v - T(v) \bullet u) - [Tu, Tv] \}, \text{ for } u, v \in M.
\end{align}
This shows that Maurer-Cartan elements on this graded Lie algebra are Rota-Baxter operators \cite{tang}.

\medskip

Let $H$ be a $2$-cocycle in the cohomology of $\mathfrak{g}$ with coefficients in $M$. Following the case of twisted Poisson manifolds \cite{fre-zam}, here we introduce a ternary degree $-1$ bracket $\llbracket ~, ~, ~ \rrbracket$ on the graded space $\bigoplus_{n \geq 0} \mathrm{Hom}(\wedge^n M , \mathfrak{g})$ as follows. For  $P \in \mathrm{Hom}(\wedge^p M, \mathfrak{g})$, $Q \in \mathrm{Hom}(\wedge^q M, \mathfrak{g})$, and $R \in \mathrm{Hom}(\wedge^r M, \mathfrak{g})$, we define $\llbracket P, Q, R \rrbracket \in \mathrm{Hom}(\wedge^{p+q+r-1} M, \mathfrak{g})$ by
\begin{align}\label{l_3}
&\llbracket P, Q, R \rrbracket (u_1, \ldots, u_{p+q+r-1}) \\
&= (-1)^{pqr} ~\frac{1}{2}  \bigg\{ \sum_{\sigma \in Sh (q, r, p-1)} (-1)^\sigma ~P \big( H (   Q ( u_{\sigma (1)}, \ldots, u_{\sigma (q)}), R (u_{\sigma (q+1)}, \ldots, u_{\sigma (q+r)}) ), u_{\sigma (q+r+1)} \ldots, u_{\sigma (p+q+r-1)}    \big) \nonumber \\
&- (-1)^{qr} \sum_{\sigma \in Sh (r, q, p-1)} (-1)^\sigma ~P \big( H (   R ( u_{\sigma (1)}, \ldots, u_{\sigma (r)}), Q (u_{\sigma (r+1)}, \ldots, u_{\sigma (q+r)}) ), u_{\sigma (q+r+1)} \ldots, u_{\sigma (p+q+r-1)}    \big) \nonumber \\
&- (-1)^{pq} \sum_{\sigma \in Sh (p, r, q-1)} (-1)^\sigma ~Q \big( H (   P ( u_{\sigma (1)}, \ldots, u_{\sigma (p)}), R (u_{\sigma (p+1)}, \ldots, u_{\sigma (p+r)}) ), u_{\sigma (p+r+1)} \ldots, u_{\sigma (p+q+r-1)}    \big) \nonumber \\
& + (-1)^{p(q+r)} \sum_{\sigma \in Sh (r, p, q-1)} (-1)^\sigma ~Q \big( H (   R ( u_{\sigma (1)}, \ldots, u_{\sigma (r)}), P (u_{\sigma (r+1)}, \ldots, u_{\sigma (p+r)}) ), u_{\sigma (p+r+1)} \ldots, u_{\sigma (p+q+r-1)}    \big) \nonumber \\
&+ (-1)^{(p+q)r} \sum_{\sigma \in Sh (p, q, r-1)} (-1)^\sigma ~R \big( H (   P ( u_{\sigma (1)}, \ldots, u_{\sigma (p)}), Q (u_{\sigma (p+1)}, \ldots, u_{\sigma (p+q)}) ), u_{\sigma (p+q+1)} \ldots, u_{\sigma (p+q+r-1)}    \big) \nonumber \\
&- (-1)^{pq+qr+rp} \sum_{\sigma \in Sh (q, p, r-1)} (-1)^\sigma ~R \big( H (   Q ( u_{\sigma (1)}, \ldots, u_{\sigma (q)}), P(u_{\sigma (q+1)}, \ldots, u_{\sigma (p+q)}) ), u_{\sigma (p+q+1)} \ldots, u_{\sigma (p+q+r-1)}    \big) \bigg\}. \nonumber
\end{align}
The bracket $\llbracket P, Q, R \rrbracket$ is obviously graded skew-symmetric. Moreover, the binary bracket $\llbracket ~, ~ \rrbracket$ and the ternary bracket $\llbracket ~, ~, ~ \rrbracket$ are compatible in the sense of $L_\infty$-algebra. This follows as $H$ is a $2$-cocycle. In summary, we obtain the following.

\begin{thm}\label{mc-thm}
Let $\mathfrak{g}$ be a Lie algebra, $M$ be a $\mathfrak{g}$-module and $H$ be a $2$-cocycle in the cohomology of $\mathfrak{g}$ with coefficients in $M$. Then the graded vector space $\bigoplus_{n \geq 0} \mathrm{Hom}(\wedge^n M, \mathfrak{g})$ is an $L_\infty$-algebra with
\begin{align*}
l_1 = 0, \qquad \quad l_2 = \llbracket ~, ~ \rrbracket, \qquad \quad l_3 = \llbracket ~, ~, ~ \rrbracket,
\end{align*}
and higher maps are trivial. A linear map $T : M \rightarrow \mathfrak{g}$ is a $H$-twisted Rota-Baxter operator if and only if $T \in \mathrm{Hom}(M, \mathfrak{g})$ is a Maurer-Cartan element in the above $L_\infty$-algebra $( \bigoplus_{n \geq 0} \mathrm{Hom}(\wedge^n M, \mathfrak{g}), \llbracket ~, ~ \rrbracket ,  \llbracket ~, ~, ~ \rrbracket )$.
\end{thm}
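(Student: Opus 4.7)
The theorem has two parts. For the Maurer--Cartan statement, I would directly evaluate both brackets on $T \in \mathrm{Hom}(M, \mathfrak{g})$: equation (\ref{binary-t}) gives $\tfrac{1}{2}\llbracket T, T \rrbracket(u, v) = T(Tu \bullet v - Tv \bullet u) - [Tu, Tv]$; specializing (\ref{l_3}) to $p=q=r=1$, each $Sh(1,1,0)$-sum has two terms and by the skew-symmetry of $H$ each of the six inner sums equals $2\, T(H(Tu, Tv))$, producing $\llbracket T, T, T \rrbracket(u,v) = -6\, T(H(Tu,Tv))$ after the outer prefactor $(-1)^{pqr} \cdot \tfrac{1}{2} = -\tfrac{1}{2}$. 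Hence $\tfrac{1}{2}\llbracket T, T \rrbracket - \tfrac{1}{6}\llbracket T, T, T \rrbracket = 0$ is precisely (\ref{iden-H-twisted}).

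\textbf{$L_\infty$ axioms via Voronov.} For the first half, the cleanest organization is via Voronov's higher derived bracket construction. Take $V = \mathfrak{g} \oplus M$, $\mathbf{L} := \bigoplus_n \mathrm{Hom}(\wedge^n V, V)$ with the Nijenhuis--Richardson bracket, $L_0 := \bigoplus_n \mathrm{Hom}(\wedge^n M, \mathfrak{g})$, and the natural projection $\pi : \mathbf{L} \to L_0$, $\pi(\phi)(u_1, \ldots, u_n) = \mathrm{pr}_\mathfrak{g}(\phi(u_1, \ldots, u_n))$ for $u_i \in M$. A parity argument --- a $\mathfrak{g}$-valued output cannot fill an $M$-slot --- shows that $L_0$ is an abelian subalgebra and that $\ker \pi$ is a subalgebra. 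Set $\Delta := \mu + \bullet + H \in \ker \pi$. The expansion
\[
\tfrac{1}{2}[\Delta, \Delta]_{\mathsf{NR}} = \tfrac{1}{2}[\mu + \bullet, \mu + \bullet]_{\mathsf{NR}} + [\mu + \bullet, H]_{\mathsf{NR}} + \tfrac{1}{2}[H, H]_{\mathsf{NR}}
\]
vanishes iff $\mu + \bullet$ is the semi-direct product bracket and $H$ is a $2$-cocycle (the middle term is $\delta_{\mathrm{CE}} H$ and the last vanishes automatically since $H$ takes values in the abelian ideal $M$). Voronov's theorem then produces an $L_\infty$-structure on $L_0$ with $l_n(a_1, \ldots, a_n) = \pi[\ldots[\Delta, a_1]_{\mathsf{NR}}, \ldots, a_n]_{\mathsf{NR}}$.

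\textbf{Identification of brackets.} It remains to identify these derived brackets with those in the statement. By a parity count $l_1 = 0$: producing a $\mathfrak{g}$-output from $[\Delta, a]_{\mathsf{NR}}$ on pure-$M$ inputs would require $\mu$, which demands two $\mathfrak{g}$-inputs that are not available. The $(\mu + \bullet)$-part of $\Delta$ reproduces (\ref{l_2}) at the second level, while the $H$-contribution is killed by $\pi$ (its output is in $M$); at the third level the $(\mu + \bullet)$-only piece vanishes by the graded Jacobi of $\llbracket~,~\rrbracket$ and the single-$H$ piece yields exactly (\ref{l_3}). Finally, $l_k = 0$ for $k \geq 4$ by slot counting on $\Delta$, which has only two argument slots.

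\textbf{Main obstacle.} The delicate part is this identification --- reproducing (\ref{l_3}) with all its shuffle signs from iterated NR-brackets and confirming $l_k = 0$ for $k \geq 4$. An entirely direct alternative bypasses Voronov and checks only the $n = 3, 4, 5$ $L_\infty$ generalized Jacobi identities by hand (all higher $n$ are automatic since they involve $l_k$ with $k \geq 4$). Among these, $n = 3$ is the graded Jacobi for $\llbracket~,~\rrbracket$ from \cite{tang}; $n = 4$ (compatibility of $l_2$ and $l_3$) is where the $2$-cocycle condition on $H$ enters via a Koszul-sign reshuffle; and $n = 5$ ($l_3$-self-Jacobi) is a pure shuffle-sum cancellation using only skew-symmetry of $H$.
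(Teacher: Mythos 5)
Your Maurer--Cartan half is exactly the paper's proof: the paper verifies the second assertion by computing $\llbracket T,T\rrbracket(u,v)=2\{T(Tu\bullet v-Tv\bullet u)-[Tu,Tv]\}$ and $\llbracket T,T,T\rrbracket(u,v)=-6\,T(H(Tu,Tv))$ and adding, and your sign bookkeeping for the six shuffle sums (each contributing $+2\,T(H(Tu,Tv))$ inside the braces, against the outer factor $-\tfrac12$) is correct. Where you genuinely diverge is the first half: the paper disposes of the $L_\infty$ axioms in one sentence (``the binary and ternary brackets are compatible in the sense of $L_\infty$-algebra; this follows as $H$ is a $2$-cocycle''), whereas you actually supply an argument, via Voronov's higher derived brackets applied to $\Delta=\mu+\bullet+H$ in $\bigoplus_n\mathrm{Hom}(\wedge^n(\mathfrak{g}\oplus M),\mathfrak{g}\oplus M)$, with the abelian subalgebra $\bigoplus_n\mathrm{Hom}(\wedge^nM,\mathfrak{g})$ and the observation that $[\Delta,\Delta]_{\mathsf{NR}}=0$ is exactly the Jacobi identity for the $H$-twisted semi-direct product, i.e.\ the $2$-cocycle condition. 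This is the route the paper's definitions are implicitly modeled on (the binary bracket is literally a derived bracket, and the ternary one is taken from Fr\'egier--Zambon), so your proposal is best read as filling in the details the paper omits; what it buys is that skew-symmetry, all higher Jacobi identities, and $l_k=0$ for $k\geq 4$ come for free from one quadratic equation, at the cost of the sign-matching between the derived $l_3$ and formula (\ref{l_3}), which you correctly single out as the labor.

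One justification in your sketch is off, though the conclusion it supports is true. You claim the pure $(\mu+\bullet)$ contribution to the derived ternary bracket ``vanishes by the graded Jacobi of $\llbracket~,~\rrbracket$.'' That is backwards: with $l_1=0$, the $n=3$ higher Jacobi identity reduces to the graded Jacobi identity for $l_2$ \emph{regardless} of what $l_3$ is, so Jacobi for $\llbracket~,~\rrbracket$ cannot force that contribution to vanish. The correct reason is the same typing argument you use elsewhere: in any composition tree realizing $P[[[\mu+\bullet,a],b],c]$ on inputs from $M$, a $\mathfrak{g}$-valued map cannot be inserted into an $M$-slot of another, and $\mu$ needs two $\mathfrak{g}$-inputs while $\bullet$ outputs in $M$ and is killed by the projection unless it feeds one of $a,b,c$ --- and then the remaining two of $a,b,c$ cannot be placed. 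This is the same slot-counting you invoke for $l_k$, $k\geq 4$, and it closes the gap; with that repair the proposal is sound.
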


\begin{proof} The first part follows from the previous discussions. For the second part, we observe that for any $T \in \mathrm{Hom} (M, \mathfrak{g})$,
\begin{align}\label{ternary-t}
\llbracket T, T, T \rrbracket (u, v) = - 6 ~ T (H (Tu, Tv)).
\end{align}
Hence from (\ref{binary-t}) and (\ref{ternary-t}), we get
\begin{align*}
\big( \frac{1}{2} \llbracket T, T \rrbracket - \frac{1}{6} \llbracket T, T, T \rrbracket \big) (u, v) =   T (T(u) \bullet v - T(v) \bullet u ) - [Tu, Tv] + T (H (Tu, Tv)).
\end{align*}
This shows that $T$ is a $H$-twisted Rota-Baxter operator if and only if $T$ is a Maurer-Cartan element in the $L_\infty$-algebra.
\end{proof}

\medskip

The above characterization of a $H$-twisted Rota-Baxter operator $T$ allows us to define a cohomology associated to $T$. More precisely, we define $C^n_T (M, \mathfrak{g}) = \mathrm{Hom}(\wedge^n M, \mathfrak{g})$, for $n \geq 0$, and the differential $d_T : C^n_T (M, \mathfrak{g}) \rightarrow C^{n+1}_T (M, \mathfrak{g})$ by
\begin{align*}
d_T (f ) = \llbracket T, f \rrbracket - \frac{1}{2} \llbracket T, T, f \rrbracket, ~ \text{ for } f \in C^n_T (M, \mathfrak{g}).
\end{align*}
The corresponding cohomology groups
\begin{align*}
H^n_T (M, \mathfrak{g}) = \frac{Z^n_T (M, \mathfrak{g})}{B^n_T (M, \mathfrak{g})} = \frac{ \{f \in C^n_T (M, \mathfrak{g}) |~ d_T f = 0\} }{ \{d_T g |~ g \in C^{n-1}_T (M, \mathfrak{g}) \} } , \text{ for } n \geq 0
\end{align*}
are called the cohomology of the $H$-twisted Rota-Baxter operator $T$.

Given an $L_\infty$-algebra and a Maurer-Cartan element, Getzler \cite{getzler} construct a new $L_\infty$-algebra twisted by the Maurer-Cartan element. Here we will follow the conventions of \cite{markl}. This rephrases in our context as follows.

\begin{prop}
Let $T$ be a $H$-twisted Rota-Baxter operator. Then $\bigoplus_{n \geq 0 } \mathrm{Hom}(\wedge^n M, \mathfrak{g})$ carries an $L_\infty$-algebra with structure maps
\begin{align*}
l_1 (P ) = d_T (P), \qquad \llbracket P, Q \rrbracket_T = \llbracket P, Q \rrbracket - \llbracket T, P, Q \rrbracket, \qquad \llbracket P, Q, R \rrbracket_T = \llbracket P, Q, R \rrbracket
\end{align*}
and trivial higher brackets. We call this as the twisted $L_\infty$-algebra twisted by $T$.
\end{prop}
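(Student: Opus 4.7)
The plan is to invoke Getzler's twisting procedure for $L_\infty$-algebras, applied to the $L_\infty$-algebra $(\bigoplus_n \mathrm{Hom}(\wedge^n M, \mathfrak{g}), \llbracket \cdot , \cdot \rrbracket, \llbracket \cdot , \cdot , \cdot \rrbracket)$ of Theorem \ref{mc-thm} with Maurer-Cartan element $T$. In the conventions of \cite{markl}, the twisted brackets are given by
\begin{align*}
l_k^T(x_1, \ldots, x_k) = \sum_{n \geq 0} \frac{(-1)^{\binom{n}{2}}}{n!}~ l_{k+n}(\underbrace{T, \ldots, T}_{n}, x_1, \ldots, x_k),
\end{align*}
and Getzler's theorem guarantees abstractly that these define an $L_\infty$-algebra whenever $T$ is Maurer-Cartan. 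Thus no $L_\infty$-relations need be checked per se; what remains is to compute the twisted brackets explicitly and see that they agree with the formulas in the statement.

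First I would observe that since our original $L_\infty$-algebra has $l_1 = 0$ and only $l_2 = \llbracket \cdot , \cdot \rrbracket$, $l_3 = \llbracket \cdot , \cdot , \cdot \rrbracket$ nontrivial, the sum truncates: only $k+n \in \{2,3\}$ contribute, so $l_k^T = 0$ for $k \geq 4$. Plugging in $n=0,1,2$ for $k=1$ gives $l_1^T(P) = \llbracket T, P \rrbracket - \tfrac{1}{2}\llbracket T, T, P \rrbracket = d_T(P)$; plugging in $n=0,1$ for $k=2$ gives $l_2^T(P,Q) = \llbracket P, Q \rrbracket - \llbracket T, P, Q \rrbracket$; and for $k=3$ only $n=0$ contributes, giving $l_3^T(P,Q,R) = \llbracket P, Q, R \rrbracket$. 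This reproduces the three formulas in the statement.

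The main technical obstacle is sign bookkeeping: the minus signs appearing in $d_T$ and in $\llbracket \cdot , \cdot \rrbracket_T$ have to come out of the prefactors $(-1)^{\binom{n}{2}}/n!$ in Getzler's formula, in combination with the internal signs already built into the unshuffle conventions defining $\llbracket \cdot, \cdot, \cdot \rrbracket$ in (\ref{l_3}). I would handle this by first testing the formulas on inputs of small arity, where all signs are transparent, checking agreement with $d_T(P)$, $\llbracket P, Q \rrbracket_T$ and $\llbracket P, Q, R \rrbracket_T$ as defined, and then appealing to Getzler's theorem to conclude in general.

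If instead one prefers a self-contained verification that avoids quoting Getzler's theorem, the plan reduces to checking by hand the three nontrivial higher Jacobi identities for $(l_1^T, l_2^T, l_3^T)$: namely $(l_1^T)^2 = 0$, which unpacks to precisely the Maurer-Cartan equation $\tfrac12 \llbracket T, T \rrbracket = \tfrac16 \llbracket T, T, T \rrbracket$ combined with the graded Jacobi identity for $\llbracket \cdot, \cdot \rrbracket$; the derivation property of $l_1^T$ with respect to $l_2^T$; and the coherence relation mixing $l_2^T$ and $l_3^T$. Each is a mechanical but sign-intensive manipulation using only the $L_\infty$-structure of Theorem \ref{mc-thm} and the Maurer-Cartan condition, and amounts to specializing the general proof of Getzler's theorem to the truncated setting at hand.
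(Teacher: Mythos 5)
Your proposal matches the paper's approach exactly: the paper gives no separate proof, simply citing Getzler's twisting construction (in the conventions of Markl) and observing that it specializes to the stated formulas, which is precisely your first route. Your explicit unpacking of the truncated sum and the optional hands-on verification are more detail than the paper provides, but the underlying argument is the same.
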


\begin{thm}
Let $T$ be a $H$-twisted Rota-Baxter operator. For any linear map $T' : M \rightarrow \mathfrak{g}$, the sum $T + T'$ is a $H$-twisted Rota-Baxter operator if and only if $T'$ is a Maurer-Cartan element in the above twisted $L_\infty$-algebra .
\end{thm}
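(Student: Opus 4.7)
The plan is to verify the statement by a direct Maurer–Cartan expansion in the $L_\infty$-algebra of Theorem \ref{mc-thm}, using multilinearity and the graded symmetry properties of the brackets $\llbracket ~, ~\rrbracket$ and $\llbracket ~, ~, ~\rrbracket$. The two unknowns $T$ and $T'$ both live in degree $1$ (namely $\mathrm{Hom}(M,\mathfrak{g})$), so the graded skew-symmetry axiom forces both brackets to be \emph{symmetric} when evaluated on $T$'s and $T'$'s. Consequently the binomial-style expansion is valid, just as in the classical (non-graded) case.

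First I would expand
\begin{align*}
\tfrac12 \llbracket T+T', T+T' \rrbracket - \tfrac16 \llbracket T+T', T+T', T+T' \rrbracket
\end{align*}
using bilinearity and trilinearity and collect terms according to how many copies of $T'$ they contain. The terms with no $T'$ give $\tfrac12\llbracket T,T\rrbracket - \tfrac16 \llbracket T,T,T\rrbracket$, which vanishes because $T$ is already a Maurer–Cartan element of the untwisted $L_\infty$-algebra (Theorem \ref{mc-thm}). The terms linear in $T'$ collect into $\llbracket T, T'\rrbracket - \tfrac12 \llbracket T,T,T'\rrbracket$, which is precisely $d_T(T') = l_1(T')$ in the twisted $L_\infty$-algebra. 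The quadratic terms in $T'$ collect into $\tfrac12\llbracket T', T'\rrbracket - \tfrac12 \llbracket T, T', T'\rrbracket = \tfrac{1}{2!}\llbracket T', T'\rrbracket_T$, and the cubic terms give $-\tfrac16 \llbracket T', T', T'\rrbracket = -\tfrac{1}{3!}\llbracket T', T', T'\rrbracket_T$ since the ternary bracket is unchanged under the Getzler twist.

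Putting these pieces together, the equation ``$T+T'$ is a $H$-twisted Rota–Baxter operator'' (equivalently, $T+T'$ is Maurer–Cartan in the original $L_\infty$-algebra, again by Theorem \ref{mc-thm}) rewrites as
\begin{align*}
l_1(T') + \tfrac{1}{2!}\llbracket T', T'\rrbracket_T - \tfrac{1}{3!}\llbracket T', T', T'\rrbracket_T = 0,
\end{align*}
which is exactly the Maurer–Cartan equation for $T'$ in the twisted $L_\infty$-algebra. This proves both directions simultaneously.

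The main obstacle is really only bookkeeping: I have to confirm that for elements of degree $1$ the graded skew-symmetry of the $L_\infty$ brackets does produce the symmetric binomial coefficients $(1,2,1)$ and $(1,3,3,1)$, and that the sign conventions in the definition of Maurer–Cartan element (the minus sign in front of $l_3$) are consistent with the formulas (\ref{binary-t}) and (\ref{ternary-t}). No further identities are needed; in particular, the $2$-cocycle condition on $H$ has already been absorbed into the $L_\infty$-identities used to define the twisted brackets, and the fact that $T$ itself satisfies the Maurer–Cartan equation is what kills the constant term in the expansion.
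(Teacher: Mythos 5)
Your proposal is correct and follows essentially the same route as the paper: the paper's proof is exactly the binomial expansion of $\tfrac12\llbracket T+T',T+T'\rrbracket-\tfrac16\llbracket T+T',T+T',T+T'\rrbracket$, using the symmetry of both brackets on degree-one elements to collect the terms by the number of copies of $T'$ and identifying them with $d_T(T')$, $\tfrac12\llbracket T',T'\rrbracket_T$ and $-\tfrac16\llbracket T',T',T'\rrbracket_T$. Your bookkeeping check (that graded skew-symmetry in degree one yields the symmetric coefficients, and that the constant term dies because $T$ is Maurer--Cartan) is precisely what the paper's computation relies on.
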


\begin{proof}
We have
\begin{align*}
&\frac{1}{2} \llbracket T + T' , T + T' \rrbracket - \frac{1}{6} \llbracket T + T', T + T', T + T' \rrbracket \\
&= \frac{1}{2} \llbracket T, T' \rrbracket + \frac{1}{2} \llbracket T', T \rrbracket + \frac{1}{2} \llbracket T', T' \rrbracket  - \frac{1}{6} \llbracket T, T, T' \rrbracket -  \frac{1}{6} \llbracket T, T', T \rrbracket -  \frac{1}{6} \llbracket T', T, T \rrbracket \\
&- \frac{1}{6} \llbracket T, T', T' \rrbracket -  \frac{1}{6} \llbracket T', T, T' \rrbracket -  \frac{1}{6} \llbracket T', T', T \rrbracket - \frac{1}{6} \llbracket T', T', T' \rrbracket \\
&= (\llbracket T, T' \rrbracket - \frac{1}{2} \llbracket T, T, T' \rrbracket)   + \frac{1}{2} ( \llbracket T', T' \rrbracket - \llbracket T, T', T' \rrbracket )  - \frac{1}{6} \llbracket T', T', T' \rrbracket \\
&= d_T (T') + \frac{1}{2} \llbracket T', T' \rrbracket_T - \frac{1}{6} \llbracket T', T', T' \rrbracket_T.
\end{align*}
This proves the statement.
\end{proof}

\subsection{Another interpretation of the cohomology}

Let $T : M \rightarrow \mathfrak{g}$ be a $H$-twisted Rota-Baxter operator. We have seen in Proposition \ref{graph-twisted-new} that the vector space $M$ carries a Lie algebra structure with bracket
\begin{align*}
[u,v]_T := T(u) \bullet v - T(v) \bullet u + H (Tu, Tv),~ \text{ for } u, v \in M.
\end{align*}
In this subsection, we show that the cohomology of $T$ can be interpreted as the Chevalley-Eilenberg cohomology of $(M, [~,~]_T)$ with coefficients in a suitable module structure on $\mathfrak{g}$.

We start with the following.

\begin{prop}
The linear map $\overline{\bullet } : M \times \mathfrak{g} \rightarrow \mathfrak{g}$ defined by
\begin{align*}
u ~\overline{\bullet}~ x = [ Tu, x ] + T (x \bullet u + H (x, Tu)), ~ \text{ for } u \in M, x \in \mathfrak{g}
\end{align*}
is a representation of the Lie algebra $(M, [~, ~]_T)$ on the vector space $\mathfrak{g}$.
\end{prop}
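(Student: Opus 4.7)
The plan is to verify directly the representation axiom
\[
[u,v]_T \,\overline{\bullet}\, x = u \,\overline{\bullet}\, (v \,\overline{\bullet}\, x) - v \,\overline{\bullet}\, (u \,\overline{\bullet}\, x)
\]
for $u,v \in M$ and $x \in \mathfrak{g}$ by a direct expansion of both sides. The calculation uses four ingredients: the Jacobi identity in $\mathfrak{g}$, the module axiom for $M$ over $\mathfrak{g}$, the $H$-twisted Rota-Baxter identity for $T$ (conveniently rephrased as $T[u,v]_T = [Tu, Tv]$), and the Chevalley-Eilenberg $2$-cocycle condition for $H$.

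For the left-hand side, $T[u,v]_T = [Tu, Tv]$ gives
\[
[u,v]_T \,\overline{\bullet}\, x = [[Tu,Tv],x] + T\bigl(x \bullet (Tu\bullet v - Tv\bullet u)\bigr) + T\bigl(x \bullet H(Tu,Tv)\bigr) + T\bigl(H(x,[Tu,Tv])\bigr).
\]
For the right-hand side, I would set $w := x \bullet v + H(x,Tv) \in M$ so that $v \,\overline{\bullet}\, x = [Tv,x] + Tw$, and then expand $u \,\overline{\bullet}\, ([Tv,x] + Tw)$ straight from the definition. The key simplification happens for the $[Tu, Tw]$ block: after applying the twisted Rota-Baxter identity to $[Tu, Tw]$ and using the skew-symmetry of $H$ to cancel $H(Tu,Tw) + H(Tw,Tu)$, the aggregate $[Tu, Tw] + T(Tw \bullet u) + T(H(Tw, Tu))$ collapses to $T(Tu \bullet w)$. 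Subtracting the $u \leftrightarrow v$ counterpart then sorts the right-hand side into three natural groups: a pure-bracket piece $[Tu,[Tv,x]] - [Tv,[Tu,x]]$, a module piece $T([Tv,x]\bullet u - [Tu,x]\bullet v) + T(Tu\bullet (x\bullet v) - Tv\bullet (x\bullet u))$, and an $H$-piece $T(H([Tv,x],Tu) - H([Tu,x],Tv)) + T(Tu \bullet H(x,Tv) - Tv \bullet H(x,Tu))$.

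The pure-bracket piece matches $[[Tu,Tv],x]$ by the Jacobi identity in $\mathfrak{g}$. For the module piece, expanding $[Tv,x]\bullet u$ and $[Tu,x]\bullet v$ via $[a,b]\bullet m = a\bullet(b\bullet m) - b\bullet(a\bullet m)$ produces cancellations that reduce it to $T(x \bullet (Tu\bullet v - Tv\bullet u))$, matching the LHS. The main obstacle, and the genuine content of the argument, is to reduce the $H$-piece to $T(x\bullet H(Tu,Tv)) + T(H(x,[Tu,Tv]))$. I would handle this by first applying skew-symmetry of $H$ to rewrite $H([Tv,x],Tu) = -H(Tu,[Tv,x])$ and $H([Tu,x],Tv) = -H(Tv,[Tu,x])$, and then invoking the $2$-cocycle relation applied to the triple $(x, Tu, Tv)$, namely
\[
x \bullet H(Tu,Tv) + Tu \bullet H(Tv,x) + Tv \bullet H(x,Tu) + H(x,[Tu,Tv]) + H(Tu,[Tv,x]) + H(Tv,[x,Tu]) = 0.
\]
After one more use of skew-symmetry on $Tu\bullet H(Tv,x)$ and on $H(Tv,[x,Tu])$, this cocycle relation is precisely the rearrangement matching the $H$-piece to the required LHS $H$-terms, and the proof concludes. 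No higher identities are needed; the argument is essentially bookkeeping that is organized cleanly once the shorthand $w$ is introduced and skew-symmetry of $H$ is used early to collapse the $[Tu,Tw]$ block.
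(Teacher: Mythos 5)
Your proposal is correct and follows essentially the same route as the paper: a direct expansion of $u\,\overline{\bullet}\,(v\,\overline{\bullet}\,x) - v\,\overline{\bullet}\,(u\,\overline{\bullet}\,x) - [u,v]_T\,\overline{\bullet}\,x$, using the twisted Rota-Baxter identity to collapse the brackets $[Tu, T(\cdot)]$, skew-symmetry of $H$, the Jacobi identity and module axiom for the bracket and module pieces, and finally the $2$-cocycle condition on the triple $(Tu,Tv,x)$ (the paper packages this last step as the vanishing of $T\bigl((\delta_{\mathrm{CE}}H)(Tu,Tv,x)\bigr)$). Your shorthand $w = x\bullet v + H(x,Tv)$ is only an organizational difference.
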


\begin{proof}
By a direct calculation using the definition of $\overline{\bullet}$, we get for $u, v \in M,~ x \in \mathfrak{g},$
\begin{align*}
&u ~\overline{\bullet}~ ( v ~\overline{\bullet}~ x )  - v ~\overline{\bullet}~ ( u ~\overline{\bullet}~ x) - [u, v]_T ~\overline{\bullet}~ x \\
&= \cancel{[ Tu, [Tv, x]]} + T ( [Tv, x] \bullet u) + TH ( [Tv, x], Tu) + [Tu, T ( x \bullet v)] + T ( T (x \bullet v) \bullet u) \\ &~~+ TH ( T (x \bullet v) , Tu) 
+ [Tu, TH (x, Tv)] + T ( TH (x, Tv) \bullet u) + TH (TH (x, Tv), Tu) - \cancel{[Tv, [Tu, x]]}  \\ &~~- T ( [Tu, x] \bullet v) - TH ([Tu, x], Tv)
- [Tv, T (x \bullet u)] - T ( T (x \bullet u) \bullet v) - TH ( T (x \bullet u), Tv) \\ &~~ - [Tv, TH (x, Tu)] - T ( TH (x, Tu) \bullet v ) -TH (TH (x, Tu), Tv) 
 -  \cancel{[[Tu, Tv], x]} - T ( x \bullet (Tu \bullet v)) \\ &~~ + T ( x \bullet (Tv \bullet u)) - T ( x \bullet H (Tu, Tv)) - TH ( x, [Tu, Tv])\\
&= T ( [Tv, x] \bullet u) + TH ( [Tv, x],Tu) + T ( (Tu) \bullet (x \bullet v))  + TH (Tu, T ( x \bullet v))  + TH ( T ( x \bullet v), Tu) \\
&~~+ T ( Tu \bullet H (x, Tv)) + TH (Tu, TH (x, Tv)) + TH (TH (x, Tv), Tu) - T ( [Tu, x] \bullet v ) - TH ([Tu, x] , Tv)\\
&~~ - T ( (Tv) \bullet (x \bullet u)) - TH ( Tv, T (x \bullet u)) - TH ( T ( x \bullet u ), Tv) - T ( Tv \bullet H (x, Tu)) - TH ( Tv, TH (x, Tu)) \\&~~ - TH ( TH (x, Tu), Tv) - T ( x \bullet (Tu \bullet v )) + T ( x \bullet (Tv \bullet u)) - T ( x \bullet H (Tu, Tv)) - TH (x, [Tu, Tv]) \\
&\stackrel{\text{after cancellations}}{=} TH ([Tv, x], Tu) + T (Tu \bullet H (x, Tv)) - TH ( [Tu, x], Tv) - T ( Tv \bullet H (x, Tu)) \\
&~~ - T ( x \bullet H (Tu, Tv)) - TH (x, [Tu, Tv])) \\
&= - T \big( (\delta_{\mathrm{CE}} H) (Tu, Tv, x) \big) = 0. 
\end{align*}
Hence the result follows.
\end{proof}

It follows from the above proposition that we may consider the Chevalley-Eilenberg cohomology of the Lie algebra $(M, [~,~]_T)$ with coefficients in the representation $\mathfrak{g}$. More precisely, we define
\begin{align*}
C^n_{\mathrm{CE}}(M, \mathfrak{g}) := \mathrm{Hom}(\wedge^n M, \mathfrak{g}), ~ \text{ for } n \geq 0
\end{align*}
and the differential $\delta_{\mathrm{CE}} : C^n_{\mathrm{CE}} (M, \mathfrak{g}) \rightarrow C^{n+1}_{\mathrm{CE}}(M, \mathfrak{g})$ by
\begin{align*}
&(\delta_{\mathrm{CE}} f) (u_1, \ldots, u_{n+1}) \\
&= \sum_{i=1}^{n+1} (-1)^{i+1} ~[ Tu_i , f(u_1, \ldots, \widehat{u_i}, \ldots, u_{n+1}) ] ~+~ \sum_{i=1}^{n+1} (-1)^{i+1} ~ T ( f(u_1, \ldots, \widehat{u_i}, \ldots, u_{n+1}) \bullet u_i ) \\
&+ \sum_{i=1}^{n+1} (-1)^{i+1} ~ TH ( f(u_1, \ldots, \widehat{u_i}, \ldots, u_{n+1}), Tu_i ) \\
&+ \sum_{i \leq i < j \leq n+1} (-1)^{i+j} ~ f (  T(u_i) \bullet u_j - T(u_j) \bullet u_i + H (Tu_i, Tu_j ), u_1, \ldots, \widehat{u_i}, \ldots, \widehat{ u_j}, \ldots, u_{n+1} ),
\end{align*}
for $f \in C^n_{\mathrm{CE}}(M, \mathfrak{g})$ and $u_1, \ldots, u_{n+1} \in M$.

\begin{prop}
Let $T: M \rightarrow \mathfrak{g}$ be a $H$-twisted Rota-Baxter operator. Then for any $f \in \mathrm{Hom}(\wedge^n M, \mathfrak{g})$,
\begin{align*}
d_T f = (-1)^n \ \delta_{\mathrm{CE}} f.
\end{align*}
Consequently, the cohomology of $T$ is isomorphic to the Chevalley-Eilenberg cohomology of $(M, [~,~]_*)$ with coefficients in $\mathfrak{g}$.
\end{prop}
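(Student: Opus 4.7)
The plan is to verify the identity $d_T f = (-1)^n \delta_{\mathrm{CE}} f$ at the cochain level by a direct expansion of the definition $d_T f = \llbracket T, f \rrbracket - \tfrac{1}{2} \llbracket T, T, f \rrbracket$ via the explicit formulas (\ref{l_2}) and (\ref{l_3}), and then match the result term-by-term with the displayed formula for $\delta_{\mathrm{CE}} f$. Since the two differentials then coincide up to a sign that depends only on the degree, they share the same kernels and images in every degree, so the induced cohomologies are canonically isomorphic.

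First I would compute $\llbracket T, f \rrbracket$ from (\ref{l_2}) with $p = 1$, $q = n$. The three shuffle sums produce respectively: a sum of terms $T\bigl(f(u_1, \ldots, \widehat{u_i}, \ldots, u_{n+1}) \bullet u_i\bigr)$ from the $Sh(n,1,0)$-shuffle; a sum of terms $f\bigl(Tu_i \bullet u_j - Tu_j \bullet u_i, \ldots, \widehat{u_i}, \ldots, \widehat{u_j}, \ldots\bigr)$ from the $Sh(1,1,n-1)$-shuffle, after grouping the $i<j$ and $j<i$ halves via the skew-symmetry of $f$; and a sum of commutators $\bigl[Tu_i, f(u_1, \ldots, \widehat{u_i}, \ldots, u_{n+1})\bigr]$ from the $Sh(1,n)$-shuffle. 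With the appropriate alternating signs, and after factoring out a uniform $(-1)^n$, these reproduce exactly the three "non-$H$" summands in the formula for $\delta_{\mathrm{CE}} f$.

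Next I would compute $-\tfrac{1}{2} \llbracket T, T, f \rrbracket$ from (\ref{l_3}) with $p = q = 1$, $r = n$. Because the first two entries are both equal to $T$, the six shuffle sums in (\ref{l_3}) pair up: lines one and three, two and four, and five and six each collapse into the same expression, which absorbs the prefactor $\tfrac{1}{2}$. The surviving contributions have only two shapes, namely sums of $T\bigl(H(Tu_i, f(u_1, \ldots, \widehat{u_i}, \ldots, u_{n+1}))\bigr)$ and of $f\bigl(H(Tu_i, Tu_j), u_1, \ldots, \widehat{u_i}, \ldots, \widehat{u_j}, \ldots, u_{n+1}\bigr)$, which after using skew-symmetry of $H$ reproduce the two remaining "$H$-summands" in $\delta_{\mathrm{CE}} f$ with the signs $(-1)^{i+1}$ and $(-1)^{i+j}$ respectively. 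Combining the two pieces then yields the desired equality $d_T f = (-1)^n \delta_{\mathrm{CE}} f$.

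The main obstacle is the ternary sign bookkeeping: formula (\ref{l_3}) carries six shuffle sums whose signs depend jointly on the degrees $p,q,r$ and on the Koszul rule, and one must check carefully that after specialising to $P = Q = T$ the pairwise collapse onto the two surviving shapes produces precisely the alternating signs that appear in $\delta_{\mathrm{CE}} f$. Once the cochain-level identity is established, the consequence is immediate: the operators $d_T$ and $\delta_{\mathrm{CE}}$ have identical kernels and identical images in every degree, so the identity map on $\mathrm{Hom}(\wedge^\bullet M, \mathfrak{g})$ descends to an isomorphism $H^n_T(M, \mathfrak{g}) \cong H^n_{\mathrm{CE}}((M, [~,~]_T), \mathfrak{g})$, as required.
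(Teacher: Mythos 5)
Your proposal is correct and follows essentially the same route as the paper: expand $\llbracket T, f\rrbracket$ and $\llbracket T, T, f\rrbracket$ from the explicit shuffle formulas, observe that the six ternary sums collapse pairwise because the first two slots both carry $T$ (absorbing the $\tfrac{1}{2}$), and match the resulting terms with $\delta_{\mathrm{CE}} f$ up to the overall sign $(-1)^n$. The only cosmetic difference is that the paper imports the binary computation from Tang et al.\ (their Proposition 3.3) rather than redoing it, and your final observation that a degreewise sign does not affect kernels or images is exactly the intended justification for the cohomology isomorphism.
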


\begin{proof}
In \cite[Proposition 3.3]{tang} the authors showed that
\begin{align*}
&\llbracket T, f \rrbracket (u_1, \ldots, u_{n+1}) \\
&= (-1)^n \bigg\{  \sum_{i=1}^{n+1} (-1)^{i+1} ~[ Tu_i , f(u_1, \ldots, \widehat{u_i}, \ldots, u_{n+1}) ] ~+~ \sum_{i=1}^{n+1} (-1)^{i+1} ~ T ( f(u_1, \ldots, \widehat{u_i}, \ldots, u_{n+1}) \bullet u_i ) \\
& \qquad \qquad \quad + \sum_{1 \leq i < j \leq n+1} (-1)^{i+j} ~ f (  T(u_i) \bullet u_j - T(u_j) \bullet u_i, u_1, \ldots, \widehat{u_i}, \ldots, \widehat{ u_j}, \ldots, u_{n+1} ) \bigg\}.
\end{align*}
Here we observe that 
\begin{align*}
\llbracket T, T, f \rrbracket (u_1, \ldots, u_{n+1})
&= (-1)^n \frac{1}{2} \bigg\{ 2 \sum_{i=1}^{n+1} (-1)^{i+1}~ T ( H (Tu_i, f (u_1, \ldots, \widehat{u_i}, \ldots, u_{n+1}))) \\
& \qquad \qquad \quad - (-1)^n~ 2 \sum_{i=1}^{n+1} (-1)^{n-i+1} ~ T  ( H (  f (u_1, \ldots, \widehat{u_i}, \ldots, u_{n+1}) , Tu_i )  )    \\
& \qquad \qquad \quad + 2 \sum_{1 \leq i < j \leq n+1} (-1)^{i+j-1} ~ f ( H (Tu_i, Tu_j ), u_1, \ldots, \widehat{u_i}, \ldots, \widehat{ u_j} , \ldots, u_{n+1}) \\
& \qquad \qquad \quad + 2 \sum_{1 \leq i < j \leq n+1} (-1)^{i+j} ~ f ( H (Tu_j, Tu_i ), u_1, \ldots, \widehat{u_i}, \ldots, \widehat{ u_j} , \ldots, u_{n+1})  \bigg\}\\
&= - (-1)^n ~2 \bigg\{ \sum_{i=1}^{n+1} (-1)^{i+1} ~ TH ( f(u_1, \ldots, \widehat{u_i}, \ldots, u_{n+1}), Tu_i ) \\
& \qquad \qquad \quad + \sum_{i \leq i < j \leq n+1} (-1)^{i+j} ~ f ( H (Tu_i, Tu_j ), u_1, \ldots, \widehat{u_i}, \ldots, \widehat{ u_j}, \ldots, u_{n+1} ) \bigg\}.
\end{align*}
Hence 
$d_T f = \llbracket T, f \rrbracket - \frac{1}{2} \llbracket T, T, f \rrbracket = (-1)^n ~ \delta_{\mathrm{CE}} f.$
This proves the result.
\end{proof}

\section{Deformations of twisted Rota-Baxter operators}\label{sec-4}
In this section, we study linear and formal deformations of a $H$-twisted Rota-Baxter operator. We introduce Nijenhuis elements associated with a $H$-twisted Rota-Baxter operator that arise from trivial deformations. Finally, we consider rigidity of a $H$-twisted Rota-Baxter operator and provide a sufficient condition for the same in terms of Nijenhuis elements.
  

\subsection{Linear deformations}
Let $\mathfrak{g}$ be a Lie algebra, $M$ be a $\mathfrak{g}$-module and $H : \wedge^2 \mathfrak{g} \rightarrow M$ be a $2$-cocycle in the Chevalley-Eilenberg cohomology of $\mathfrak{g}$ with coefficients in $M$. Suppose $T: M \rightarrow \mathfrak{g}$ is a $H$-twisted Rota-Baxter operator.

\begin{defn}
A linear deformation of $T$ consists of a parametrized sum $T_t = T + t T_1$, for some $T_1 \in \mathrm{Hom}(M, \mathfrak{g})$ such that $T_t$ is a $H$-twisted Rota-Baxter operator for all values of $t$. In this case, we say that $T_1$ generates a linear deformation of $T$.
\end{defn}

Thus, in a linear deformation $T_t = T + t T_1$, we must have the followings
\begin{align}
[T(u), T_1(v)] + [T_1(u), T(v)] =~& T_1 ( T(u) \bullet v - T(v) \bullet u + H (Tu, Tv)) \label{lin-def-a} \\
&+ T \big( T_1(u) \bullet v - T_1(v) \bullet u + H (T_1(u), Tv) + H (Tu, T_1(v)) \big), \nonumber \\
[T_1(u), T_1(v)] =~& T ( H ( T_1(u), T_1(v))) + T_1 ( T_1(u) \bullet v - T_1(v) \bullet u + H (T_1(u), Tv) + H (Tu, T_1(v)) ), \\
0 =~& T_1 (H(T_1(u), T_1(v)) ).
\end{align}

Note that the identity (\ref{lin-def-a}) implies that $T_1$ is a $1$-cocycle in the cohomology of $T$. Hence, $T_1$ defines a cohomology class in $H^1_T (M, \mathfrak{g}).$

\begin{defn}
Two linear deformations $T_t = T+ tT_1$ and $T_t' = T + t T_1'$ of a $H$-twisted Rota-Baxter operator $T$ are said to be equivalent if there exists an element $x \in \mathfrak{g}$ such that the pair
\begin{align*}
\big( \phi_t = \mathrm{id}_\mathfrak{g} + t [x, -],~ \psi_t = \mathrm{id}_M + t (x \bullet - ~+ H (x, T -)) \big)
\end{align*}
defines a morphism of $H$-twisted Rota-Baxter operators from $T_t$ to $T_t'$.
\end{defn}

Thus, it follows that $\phi_t$ is a Lie algebra homomorphism which implies that
\begin{align}\label{def-lie-hom}
[[x,y], [x,z]] = 0, ~ \text{ for } y, z \in \mathfrak{g}.
\end{align}
The condition $\psi_t ( y \bullet u ) = \phi_t (y) \bullet \psi_t (u)$ implies that
\begin{align}\label{def-action-pre}
\begin{cases}
H (x, T( y \bullet u)) = y \bullet H (x, Tu),\\
[x,y] \bullet (x \bullet u + H (x, Tu)) = 0.
\end{cases}
\end{align}
Finally, the conditions $\psi_t \circ H = H \circ (\phi_t \otimes \phi_t )$ and $\phi_t \circ T_t = T_t' \circ \psi_t$ are respectively equivalent to
\begin{align}\label{def-new-1}
\begin{cases}
x \bullet H (y,z) + H (x, TH (y,z)) = H (  [x,y], z ) + H (y, [x,z]),\\
H ( [x,y] , [x, z]) = 0,
\end{cases}
\end{align}
\begin{align}\label{def-new-2}
\begin{cases}
T_1 (u) + [x, Tu] = T (x \bullet u  + H (x, Tu)) + T_1' (u),\\
[x, T_1(u)]  = T_1' (x \bullet u + H (x, Tu)).
\end{cases}
\end{align}
Note that the above identities hold for all $y, x \in \mathfrak{g}$ and $u \in M$. It is easy to see from the 
 first condition of (\ref{def-new-2}) that $T_1(u) - T_1'(u) = d_T(x)(u)$. Therefore, we get the following.
\begin{thm}
Let $T_t = T + t T_1$ and $T_t = T+ t T_1'$ be two equivalent linear deformations of a $H$-twisted Rota-Baxter operator. Then $T_1$ and $T_1'$ defines the same cohomology class in $H^1_T(M, \mathfrak{g})$.
\end{thm}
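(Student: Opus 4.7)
The plan is to read off the relevant identity from the equivalence data and then recognize its right-hand side as $d_T$ applied to the element $x \in \mathfrak{g}$, viewed as a $0$-cochain in $C^0_T(M,\mathfrak{g}) = \mathrm{Hom}(\wedge^0 M,\mathfrak{g}) \cong \mathfrak{g}$.

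First I would isolate the first equation of display (\ref{def-new-2}), which comes directly from the condition $\phi_t \circ T_t = T_t' \circ \psi_t$ by comparing coefficients of $t$. Rearranging gives
\[
T_1(u) - T_1'(u) \;=\; T\bigl(x \bullet u + H(x, Tu)\bigr) - [x, Tu], \qquad u \in M.
\]
So the whole statement reduces to showing that the right-hand side equals $(d_T x)(u)$, i.e., that $T_1 - T_1' \in B^1_T(M,\mathfrak{g})$.

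Next I would invoke the second interpretation of the cohomology given in the previous subsection, namely that $d_T f = (-1)^n \delta_{\mathrm{CE}} f$ on $C^n_T(M,\mathfrak{g})$, where $\delta_{\mathrm{CE}}$ is the Chevalley--Eilenberg differential of the Lie algebra $(M,[\,,\,]_T)$ with coefficients in $\mathfrak{g}$ via the representation $\overline{\bullet}$. For $n=0$ and $x \in \mathfrak{g} = C^0_T(M,\mathfrak{g})$, the differential is simply
\[
(d_T x)(u) \;=\; (\delta_{\mathrm{CE}} x)(u) \;=\; u \,\overline{\bullet}\, x \;=\; [Tu, x] + T\bigl(x \bullet u + H(x, Tu)\bigr),
\]
using the explicit formula for $\overline{\bullet}$ proved just before Proposition~4.x. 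Since $[Tu,x] = -[x,Tu]$, this exactly matches the right-hand side of the displayed equation above, so $T_1 - T_1' = d_T x$.

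The only subtlety is making sure the cohomology class is well defined, i.e., that $T_1$ and $T_1'$ are each $1$-cocycles; this was already observed for any generator of a linear deformation via identity (\ref{lin-def-a}), so $[T_1], [T_1'] \in H^1_T(M,\mathfrak{g})$, and the computation above shows $[T_1] = [T_1']$. There is no real obstacle; everything is a direct consequence of matching the coefficient-of-$t$ identity with the formula for $d_T$ on $C^0_T$, and the remaining equations in (\ref{def-lie-hom})--(\ref{def-new-2}) are not needed for this particular statement (they merely guarantee that the pair $(\phi_t,\psi_t)$ really is a morphism of $H$-twisted Rota--Baxter operators).
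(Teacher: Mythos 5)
Your proof is correct and takes essentially the same route as the paper: the paper likewise isolates the first identity of (\ref{def-new-2}) from the coefficient of $t$ in $\phi_t \circ T_t = T_t' \circ \psi_t$ and concludes $T_1 - T_1' = d_T(x)$. You merely make explicit the verification (which the paper labels ``easy to see'') that the right-hand side equals $(d_T x)(u) = u\,\overline{\bullet}\,x = [Tu,x] + T(x\bullet u + H(x,Tu))$ via the Chevalley--Eilenberg description of $d_T$ on $0$-cochains.
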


\begin{defn}
A linear deformation $T_t = T + t T_1$ of a $H$-twisted Rota-Baxter operator is said to be trivial if $T_t$ is equivalent to the undeformed deformation $T_t' = T$.
\end{defn} 

We will now define Nijenhuis elements associated with a $H$-twisted Rota-Baxter operator.

\begin{defn}
Let $T$ be a $H$-twisted Rota-Baxter operator. An element $x \in \mathfrak{g}$ is called a Nijenhuis element associated to $T$ if $x$ satisfies
\begin{align*}
[x, u \overline{\bullet} x ] = 0
\end{align*}
and (\ref{def-lie-hom}), (\ref{def-action-pre}), (\ref{def-new-1}) holds.
\end{defn}

We denote by $\mathrm{Nij}(T)$ the set of all Nijenhuis elements associated with the $H$-twisted Rota-Baxter operator. It follows from the above definition that a trivial linear deformation gives rise to a Nijenhuis element which is the motivation to introduce the definition. In the next subsection, we obtain a sufficient condition for the rigidity of a twisted Rota-Baxter operator in terms of Nijenhuis elements.




\subsection{Formal deformations}
In this subsection, we consider formal deformations of $H$-twisted Rota-Baxter operators generalizing the classical deformation theory of Gerstenhaber \cite{gers}.

Let $\mathfrak{g}$ be a Lie algebra, $M$ be a $\mathfrak{g}$-module and $H$ be a $2$-cocycle in the Chevalley-Eilenberg cohomology of $\mathfrak{g}$ with coefficients in $M$. Let $T: M \rightarrow \mathfrak{g}$ be a $H$-twisted Rota-Baxter operator.

Consider the space $\mathfrak{g}[[t]]$ of formal power series in $t$ with coefficients from $\mathfrak{g}$. Then $\mathfrak{g}[[t]]$ is a $\mathbb{K}[[t]]$-module. Note that the Lie algebra structure on $\mathfrak{g}$ induces a Lie algebra structure on $\mathfrak{g}[[t]]$ by $\mathbb{K}[[t]]$-bilinearity. Moreover, the $\mathfrak{g}$-module structure on $M$ induces a $\mathfrak{g}[[t]]$-module structure on $M[[t]]$. Similarly, the $2$-cocycle $H$ can be extended to a $2$-cocycle (denoted by the same notation $H$) on the Lie algebra $\mathfrak{g}[[t]]$ with coefficients in $M[[t]]$.

\begin{defn}
A formal deformation of $T$ consists of a formal sum $T_t = \sum_{i=0}^\infty t^i T_i$ (with $T_i \in \mathrm{Hom}(M, \mathfrak{g})$ for each $i$ and $T_0 = T$) such that $T_t$ is a $H$-twisted Rota-Baxter operator.
\end{defn}

Thus, in a formal deformation as above, the following system of equations must hold: for $n \geq 0$,
\begin{align}\label{formal-def-eqn}
\sum_{i+j = n} [T_i (u), T_j (v)] = \sum_{i+j = n} T_i ( T_j (u) \bullet v - T_j (v) \bullet u) + \sum_{i+j+k = n} T_i \big( H(T_j(u), T_k(v)) \big).
\end{align}
These equations are called deformation equations. Note that (\ref{formal-def-eqn}) holds for $n = 0$ as $T_0 = T$ is a $H$-twisted Rota-Baxter operator. For $n = 1$, we get
\begin{align*}
[T(u), T_1(v)] + [T_1(u), T(v)] =~& T_1 ( T(u) \bullet v - T(v) \bullet u + H (Tu, Tv)) \\
&+ T \big( T_1(u) \bullet v - T_1(v) \bullet u + H (T_1(u), Tv) + H (Tu, T_1(v)) \big).
\end{align*}
This implies that $(d_T (T_1))(u, v) = 0$. Hence the linear term $T_1$ is a $1$-cocycle in the cohomology of $T$, called the infinitesimal of the deformation $T_t$.

In the following, we define an equivalence between two formal deformations generalizing the equivalence between linear deformations.

\begin{defn}
Let $T_t = \sum_{i=0}^\infty t^i T_i$ and $T_t' = \sum_{i=0}^\infty t^i T_i'$ be two formal deformations of a $H$-twisted Rota-Baxter operator. They are said to be equivalent if there exists an element $x \in \mathfrak{g}$, linear maps $\phi_i \in \mathrm{Hom}(\mathfrak{g}, \mathfrak{g})$ and $\psi \in \mathrm{Hom}(M, M)$ for $i \geq 2$ such that the pair
\begin{align*}
(\phi_t = \mathrm{id}_\mathfrak{g} + t [x, -] + \sum_{i =2}^\infty t^i \phi_i,~ \psi_t = \mathrm{id}_M + t (x \bullet - ~ + H(x, T -)) + \sum_{i=2}^\infty t^i \psi_i )
\end{align*}
is a morphism of $H$-twisted Rota-Baxter operators from $T_t$ to $T_t'$.
\end{defn}

Thus, it follows that $(\phi_t, \psi_t)$ satisfies the conditions of Definition \ref{rota-morphism}. In particular, the condition $(\phi_t \circ T_t)(u) = (T_t' \circ \psi_t) (u)$ implies that (by equating coefficients of $t$)
\begin{align*}
T_1 (u) - T_1'(u) = [Tu, x] + T ( x \bullet u + H (x, Tu)) = (d_T (x)) (u).
\end{align*}
Hence, we get the following.

\begin{thm}
If $T_t = \sum_{i=0}^\infty t^i T_i$ and $T_t' = \sum_{i=0}^\infty t^i T_i'$ are two equivalent formal deformations of a $H$-twisted Rota-Baxter operator $T$, then $T_1$ and $T_1'$ defines the same cohomology class in $H^1_T (M, \mathfrak{g}).$
\end{thm}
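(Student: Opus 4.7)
The plan is to unpack the equivalence condition at first order in $t$ and recognize the resulting expression as a coboundary in the cohomology of $T$. By assumption $(\phi_t, \psi_t)$ is a morphism of $H$-twisted Rota-Baxter operators from $T_t$ to $T_t'$, so in particular the identity $\phi_t \circ T_t = T_t' \circ \psi_t$ holds as a formal power series in $\mathrm{Hom}(M, \mathfrak{g})[[t]]$. My first step is to expand both sides using the explicit forms $\phi_t = \mathrm{id}_\mathfrak{g} + t[x, -] + \sum_{i \geq 2} t^i \phi_i$ and $\psi_t = \mathrm{id}_M + t(x \bullet - \, + H(x, T-)) + \sum_{i \geq 2} t^i \psi_i$, and equate the coefficient of $t$.

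At order $t^0$ both sides equal $T$, consistent with the common starting point. Extracting the coefficient of $t^1$ gives
\begin{align*}
T_1(u) + [x, Tu] = T_1'(u) + T\bigl(x \bullet u + H(x, Tu)\bigr), \quad \text{for all } u \in M.
\end{align*}
Rearranging this yields
\begin{align*}
(T_1 - T_1')(u) = [Tu, x] + T(x \bullet u) + TH(x, Tu).
\end{align*}

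The final step is to identify the right-hand side as $(d_T x)(u)$. Applying the explicit formula for $\delta_{\mathrm{CE}}$ from the preceding subsection to a $0$-cochain $x \in C^0_{\mathrm{CE}}(M, \mathfrak{g}) = \mathfrak{g}$ gives precisely $(\delta_{\mathrm{CE}} x)(u) = [Tu, x] + T(x \bullet u) + TH(x, Tu)$, and the identity $d_T f = (-1)^n \delta_{\mathrm{CE}} f$ established earlier specializes (at $n=0$) to $d_T x = \delta_{\mathrm{CE}} x$. Consequently $T_1 - T_1' = d_T(x)$ lies in $B^1_T(M, \mathfrak{g})$, so $T_1$ and $T_1'$ define the same cohomology class in $H^1_T(M, \mathfrak{g})$.

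There is essentially no obstacle in this argument, since all the heavy work has already been done: the cohomology $H^\bullet_T$ is set up, the coboundary operator $d_T$ on $0$-cochains has been computed, and the analogous linear statement has already been verified. The only care required is tracking signs when expanding the composition and reading off the coefficient of $t$. The higher-order data $\phi_i, \psi_i$ for $i \geq 2$ play no role in this particular statement; they would enter only if one wanted to compare higher obstruction classes associated with extending a deformation.
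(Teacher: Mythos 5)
Your proof is correct and follows essentially the same route as the paper: the paper likewise extracts the coefficient of $t$ from $\phi_t \circ T_t = T_t' \circ \psi_t$ to obtain $T_1(u) - T_1'(u) = [Tu, x] + T(x \bullet u + H(x, Tu)) = (d_T(x))(u)$. Your additional verification that this expression agrees with $\delta_{\mathrm{CE}}x$ at $n=0$ is a welcome (if implicit in the paper) sanity check, and your sign bookkeeping is accurate.
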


\begin{defn}
A $H$-twisted Rota-Baxter operator $T$ is said to be rigid if any formal deformation of $T$ is equivalent to the undeformed one $T_t' = T$.
\end{defn}

The next result gives a sufficient condition for the rigidity of a $H$-twisted Rota-Baxter operator.

\begin{thm}
Let $T$ be a $H$-twisted Rota-Baxter operator. If $Z^1_T (M, \mathfrak{g}) = d_T (\mathrm{Nij}(T))$ then $T$ is rigid.
\end{thm}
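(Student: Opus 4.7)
The plan is to prove rigidity by induction on the order at which the deformation is non-trivial, repeatedly applying gauge equivalences generated by Nijenhuis elements in order to kill the lowest-order non-trivial term, and then composing all such equivalences in the $t$-adic topology.

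First I would start with a formal deformation $T_t = T + \sum_{i \geq 1} t^i T_i$. The order-$t$ part of the deformation equation (\ref{formal-def-eqn}) gives $d_T(T_1) = 0$, so $T_1 \in Z^1_T(M, \mathfrak{g})$. By the hypothesis $Z^1_T(M, \mathfrak{g}) = d_T(\mathrm{Nij}(T))$, there exists a Nijenhuis element $x_1 \in \mathrm{Nij}(T)$ with $T_1 = d_T(x_1)$. Since $x_1$ is Nijenhuis, the conditions (\ref{def-lie-hom}), (\ref{def-action-pre}), (\ref{def-new-1}) together with $[x_1, u\,\overline{\bullet}\, x_1] = 0$ are satisfied, so that the pair
\[
\phi_t = \mathrm{id}_{\mathfrak{g}} + t[x_1, -] + \sum_{i \geq 2} t^i \phi_i, \qquad \psi_t = \mathrm{id}_M + t\bigl(x_1 \bullet - + H(x_1, T-)\bigr) + \sum_{i \geq 2} t^i \psi_i,
\]
with appropriate higher-order corrections $\phi_i, \psi_i$, defines an equivalence in the sense of Definition \ref{rota-morphism} transforming $T_t$ into a formal deformation $T_t' = T + t^2 T_2' + \cdots$ whose linear term vanishes (this is the formal-power-series analogue of the computation $T_1' = T_1 - d_T(x_1) = 0$ carried out in the linear case).

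The inductive step is analogous. Assume $T_t$ is already equivalent to a deformation of the form $T + t^n T_n + O(t^{n+1})$ for some $n \geq 2$. Because $T_i = 0$ for $1 \leq i < n$, the order-$t^n$ deformation equation collapses to $d_T(T_n) = 0$, so $T_n \in Z^1_T(M, \mathfrak{g})$. By the hypothesis, write $T_n = d_T(x_n)$ for some $x_n \in \mathrm{Nij}(T)$, and apply the gauge equivalence obtained by replacing $t$ with $t^n$ in the formulas above, i.e.,
\[
\phi_t = \mathrm{id}_{\mathfrak{g}} + t^n[x_n, -] + \cdots, \qquad \psi_t = \mathrm{id}_M + t^n\bigl(x_n \bullet - + H(x_n, T-)\bigr) + \cdots.
\]
The Nijenhuis conditions on $x_n$ are degree-independent in $t$, so they again guarantee the compatibility of $(\phi_t, \psi_t)$ as a morphism of $H$-twisted Rota-Baxter operators, and the leading change to $T_t$ is precisely $-t^n d_T(x_n)$, which cancels $t^n T_n$. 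Composing all these successive equivalences yields a formal equivalence between $T_t$ and the trivial deformation, which is well defined because only finitely many factors contribute to each fixed power of $t$.

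The main obstacle is verifying that at each inductive stage the Nijenhuis conditions together with the freedom to choose the higher-order correction terms $\phi_i, \psi_i$ are sufficient to produce a genuine equivalence order by order, rather than only to cancel the leading coefficient. This reduces to an order-by-order analysis of the equations in Definition \ref{rota-morphism}: at order $t^n$ one recovers exactly the Nijenhuis conditions and the coboundary equation $T_n = d_T(x_n)$, while the obstructions appearing at orders $t^{n+k}$ (for $k \geq 1$) can be absorbed into suitable choices of $\phi_{n+k}, \psi_{n+k}$. This bookkeeping proceeds in the same spirit as the Gerstenhaber-style rigidity argument for Rota-Baxter operators on Lie algebras developed in \cite{tang}, the only new feature being the presence of the ternary bracket from (\ref{l_3}), whose contributions at order $t^n$ involve only the fixed $2$-cocycle $H$ and hence are controlled uniformly by the Nijenhuis conditions.
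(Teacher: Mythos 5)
Your proposal is correct and follows essentially the same route as the paper: use the hypothesis to write the lowest-order nonzero coefficient as $d_T(x)$ for a Nijenhuis element $x$, gauge away that term via the pair $(\phi_t,\psi_t)$ built from $x$, and iterate, with the composition converging $t$-adically. The paper carries out only the first step explicitly (computing $\phi_t\circ T_t\circ\psi_t^{-1}$ modulo $t^2$) and then says ``apply the same process repeatedly,'' so your more careful order-by-order bookkeeping is a faithful elaboration rather than a different argument.
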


\begin{proof}
Let $T_t = \sum_{i=0}^\infty t^i T_i$ be any formal deformation of $T$. Then we have seen that $T_1$ is a $1$-cocycle in the cohomology of $T$, i.e. $T_1 \in Z^1_T (M, \mathfrak{g})$. Thus, from the hypothesis, we get a Nijenhuis element $x \in \mathrm{Nij}(T)$ such that $T_1 = d_T (x)$. We take
\begin{align*}
\phi_t = \mathrm{id}_\mathfrak{g} + t [x, -], \qquad \psi_t = \mathrm{id}_M + t ( x \bullet - ~ + H (x, T-)),
\end{align*}
and define $T_t' = \phi_t \circ T_t \circ \psi_t^{-1}$. Then $T_t'$ is a deformation equivalent to $T_t$. We claim that the coefficient of $t$ in $T_t'$ is trivial. To see this, we observe that
\begin{align*}
T_t'(u) ~~~(\mathrm{mod }~ t^2) =~&    (\mathrm{id}_\mathfrak{g} + t [x, -]) \circ (T + tT_1) \circ (\mathrm{id}_M - t (x \bullet - ~ + H (x, T-))(u) ~~~(\mathrm{mod }~ t^2) \\
=~&   (\mathrm{id}_\mathfrak{g} + t [x, -]) ( Tu + t T_1(u) - t T(x \bullet u + H (x, Tu)))  ~~~(\mathrm{mod }~ t^2) \\
=~& Tu + t (     T_1 (u) - T(x \bullet u)  -TH (x, Tu) + [x, Tu]) \\
=~& Tu ~~~~ (\mathrm{as }~ T_1 (u) = d_T (x)(u)).
\end{align*}
This proves the claim. By applying the same process repeatedly, we get that $T_t$ is equivalent to $T$. Hence $T$ is rigid.
\end{proof}

\section{Applications}\label{sec-app}

\subsection{Reynolds operators}
As Reynolds operators on a Lie algebra $\mathfrak{g}$ are a particular case of $H$-twisted Rota-Baxter operators, we can apply the results of previous sections to Reynolds operators.

The following result is the Reynolds operator analogue of Theorem \ref{mc-thm}.

\begin{thm}
Let $(\mathfrak{g}, \mu = [~,~])$ be a Lie algebra. Then the graded vector space $\bigoplus_n \mathrm{Hom}(\wedge^n \mathfrak{g}, \mathfrak{g})$ carries an $L_\infty$-algebra structure with only nontrivial $l_2$ and $l_3$ that are given by (\ref{l_2}) and (\ref{l_3}) with $H = -\mu$. Moreover, a linear map $R : \mathfrak{g} \rightarrow \mathfrak{g}$ is a 
Reynolds operator on $\mathfrak{g}$ if and only if $R$ is a Maurer-Cartan element in the $L_\infty$-algebra.
\end{thm}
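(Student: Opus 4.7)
The strategy is to view this theorem as a direct specialization of Theorem \ref{mc-thm} applied to the adjoint representation together with the $2$-cocycle $H = -\mu$. Since the definition of a Reynolds operator (identity (\ref{reynolds-iden})) is precisely the $H$-twisted Rota-Baxter identity (\ref{iden-H-twisted}) for $M = \mathfrak{g}$ with adjoint action $x \bullet y = [x,y]$ and $H = -\mu$ (after one use of skew-symmetry of the bracket), the only genuinely new thing I need to check is that the adjoint bracket $\mu$ really is a $2$-cocycle in $C^2_{\mathrm{CE}}(\mathfrak{g}, \mathfrak{g})$; everything else is inherited from the framework already established.

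First I would spell out the module structure: $\mathfrak{g}$ is a representation of itself via $x \bullet y := [x,y]$, and the representation axiom $[x,y] \bullet z = x \bullet (y \bullet z) - y \bullet (x \bullet z)$ is just the Jacobi identity. Second, I would compute $\delta_{\mathrm{CE}} \mu$ on a triple $(x_1, x_2, x_3)$ using the formula for $\delta_{\mathrm{CE}}$ given in Section \ref{sec-2}. The three terms coming from the $\bullet$-action sum to $[x_1,[x_2,x_3]] - [x_2,[x_1,x_3]] + [x_3,[x_1,x_2]]$, which vanishes by Jacobi; the three terms coming from $\mu([x_i,x_j],-)$ likewise collapse to a Jacobi sum and vanish. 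Hence $\mu$, and therefore $-\mu$, lies in $Z^2_{\mathrm{CE}}(\mathfrak{g}, \mathfrak{g})$.

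With that in hand, Theorem \ref{mc-thm} applies verbatim with $M = \mathfrak{g}$ and $H = -\mu$, producing the desired $L_\infty$-structure on $\bigoplus_n \mathrm{Hom}(\wedge^n \mathfrak{g}, \mathfrak{g})$ with only $l_2 = \llbracket\,,\,\rrbracket$ and $l_3 = \llbracket\,,\,,\,\rrbracket$ nontrivial, given by the specializations of (\ref{l_2}) and (\ref{l_3}). For the Maurer-Cartan characterization, I would substitute $T = R$ into equations (\ref{binary-t}) and (\ref{ternary-t}) and rewrite:
\begin{align*}
\tfrac{1}{2}\llbracket R, R\rrbracket(x,y) - \tfrac{1}{6}\llbracket R, R, R\rrbracket(x,y) = R([Rx, y] - [Ry, x] - [Rx, Ry]) - [Rx, Ry],
\end{align*}
which, after applying skew-symmetry of the original bracket inside the argument of $R$, vanishes precisely when $R$ satisfies (\ref{reynolds-iden}).

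There is no real obstacle here: this is essentially a corollary extracting the Reynolds case from the general twisted theory. The only subtle point worth care is the cocycle verification, which is mild because it reduces to the Jacobi identity, and the bookkeeping that $-\mu$ (not $\mu$) is the correct choice of twist so that the sign in the Reynolds identity comes out right, as fixed in Section \ref{sec-2}.
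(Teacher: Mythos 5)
Your proposal is correct and follows exactly the route the paper intends: the paper states this result without proof as the ``Reynolds operator analogue'' of Theorem \ref{mc-thm}, having already noted in Section \ref{sec-2} that $\mu$ is a $2$-cocycle for the adjoint representation and that Reynolds operators are by definition $(-\mu)$-twisted Rota-Baxter operators. Your verification that the cocycle condition reduces to the Jacobi identity and your explicit check that the Maurer--Cartan equation specializes to (\ref{reynolds-iden}) simply fill in the details the paper leaves implicit.
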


Given a Reynolds operator $R$ on the Lie algebra $\mathfrak{g}$, the cohomology induced from the corresponding Maurer-Cartan element is called the cohomology of $R$. This cohomology of $R$ can also be seen as the Chevalley-Eilenberg cohomology of a certain Lie algebra with coefficients in a representation. More precisely, if $R: \mathfrak{g} \rightarrow \mathfrak{g}$ is a Reynolds operator, then $\mathfrak{g}$ has a new Lie algebra structure with bracket $[~,~]_R$ given by (\ref{rey-new-brkt}).
The Lie algebra $(\mathfrak{g}, [~,~]_R)$ has a representation on $\mathfrak{g}$ itself by the following
\begin{align*}
 x ~\overline{\bullet}_R~ y := [Rx, y] + R ([y, x] - [y, Rx]), ~\text{ for } x, y \in \mathfrak{g}.
\end{align*}
Then the corresponding Chevalley-Eilenberg cohomology is isomorphic to the cohomology of $R$.

Deformations of a Reynolds operator can also be defined as of Section \ref{sec-4}. Such deformations are governed by the cohomology of the Reynolds operator.

\subsection{Twisted triangular {\bf r}-matrices}

In this subsection, we introduce cohomology of twisted triangular $r$-matrices from the points of view of twisted Rota-Baxter operators. Let $\mathfrak{g}$ be a Lie algebra. Consider the coadjoint representation $\mathfrak{g}^*$ of the Lie algebra $\mathfrak{g}$. By considering the coadjoint representation in Theorem \ref{mc-thm}, we get the following.

\begin{thm}
Let $\mathfrak{g}$ be a Lie algebra and $\psi \in \wedge^3 \mathfrak{g}^*$ be a $3$-cocycle of $\mathfrak{g}$ with coefficients in $\mathbb{K}$. Then the graded vector space $ \bigoplus_{n \geq 0} \mathrm{Hom} (\wedge^{n} \mathfrak{g}^*, \mathfrak{g})$ carries an $L_\infty$-algebra structure given by
\begin{align*}
l_1 = 0, \qquad l_2  =  \llbracket ~, ~ \rrbracket ,  \qquad
l_3 =   \llbracket ~, ~, ~ \rrbracket
\end{align*}
and trivial higher brackets. Here the binary bracket $\llbracket ~, ~ \rrbracket$ and the ternary bracket $\llbracket ~, ~, ~ \rrbracket$ as of (\ref{l_2}) and (\ref{ternary-t}), and we use $H = \psi^\sharp : \wedge^2 \mathfrak{g} \rightarrow \mathfrak{g}^*$ which is a $2$-cocycle of $\mathfrak{g}$ with coefficients in the coadjoint representation $\mathfrak{g}^*$.
An element ${\bf r } \in \wedge^2 \mathfrak{g}$ is a $\psi$-twisted triangular $r$-matrix if and only if ${\bf r}^\sharp : \mathfrak{g}^* \rightarrow \mathfrak{g}$ is a Maurer-Cartan element in the above $L_\infty$-algebra.
\end{thm}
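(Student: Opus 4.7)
The plan is to deduce this theorem as a direct specialization of Theorem \ref{mc-thm} together with the observation already recorded in Example \ref{exam-t-p}. The key point is that $\mathfrak{g}^{*}$ carries the coadjoint representation of $\mathfrak{g}$, so $\mathfrak{g}^{*}$ plays the role of the $\mathfrak{g}$-module $M$ from Section \ref{sec-mc}, and $\mathrm{Hom}(\wedge^{n}\mathfrak{g}^{*},\mathfrak{g})$ is exactly the $n$-th graded piece that appears there. Once we identify the right $2$-cocycle, the first half of the statement is immediate.

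First I would verify that the dualization $\psi^{\sharp}:\wedge^{2}\mathfrak{g}\to\mathfrak{g}^{*}$, $(x,y)\mapsto\psi(x,y,-)$, of a $3$-cocycle $\psi\in\wedge^{3}\mathfrak{g}^{*}$ is indeed a $2$-cocycle in $C^{2}_{\mathrm{CE}}(\mathfrak{g},\mathfrak{g}^{*})$ where the coefficients are the coadjoint representation. This is a routine unpacking: write out $(\delta_{\mathrm{CE}}\psi^{\sharp})(x,y,z)(w)$ using the coadjoint action $x\bullet\alpha=-\alpha([x,-])$, rewrite everything in terms of $\psi$ evaluated on quadruples of elements of $\mathfrak{g}$, and observe that what remains is precisely $(\delta_{\mathrm{CE}}\psi)(x,y,z,w)=0$. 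This is the only purely computational step, but it is formal and should not present any real obstacle.

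Having established that $\psi^{\sharp}$ is a $2$-cocycle, Theorem \ref{mc-thm} applied with $M=\mathfrak{g}^{*}$ and $H=\psi^{\sharp}$ immediately gives the $L_\infty$-algebra structure on $\bigoplus_{n\geq 0}\mathrm{Hom}(\wedge^{n}\mathfrak{g}^{*},\mathfrak{g})$ with $l_{1}=0$, $l_{2}=\llbracket\,,\,\rrbracket$ and $l_{3}=\llbracket\,,\,,\,\rrbracket$, produced by formulas \eqref{l_2} and \eqref{l_3}, and states that a linear map $T:\mathfrak{g}^{*}\to\mathfrak{g}$ is a Maurer-Cartan element precisely when $T$ is a $\psi^{\sharp}$-twisted Rota-Baxter operator. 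In other words, the $L_\infty$-algebra part of the theorem is a direct corollary.

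For the second half I would invoke Example \ref{exam-t-p}, where it was already noted that an element ${\bf r}\in\wedge^{2}\mathfrak{g}$ satisfies the $\psi$-twisted classical Yang-Baxter equation \eqref{tw-eqn} if and only if ${\bf r}^{\sharp}:\mathfrak{g}^{*}\to\mathfrak{g}$ is a $\psi^{\sharp}$-twisted Rota-Baxter operator over $\mathfrak{g}$. Combining this equivalence with the Maurer-Cartan characterization of the previous step yields the final biconditional. The only remaining subtlety is to make the identification ${\bf r}\leftrightarrow{\bf r}^{\sharp}$ explicit (pairing \eqref{tw-eqn} against three covectors $\alpha,\beta,\gamma\in\mathfrak{g}^{*}$ and matching the result with identity \eqref{iden-H-twisted} for $T={\bf r}^{\sharp}$ and $H=\psi^{\sharp}$), which I would record but not grind through. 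No genuine obstacle is expected; the substantive content of the theorem is really concentrated in Theorem \ref{mc-thm} and the cocycle identification of the first step.
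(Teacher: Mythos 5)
Your proposal is correct and follows exactly the route the paper takes: the paper derives this theorem by specializing Theorem \ref{mc-thm} to the coadjoint module $M=\mathfrak{g}^{*}$ with $H=\psi^{\sharp}$, and obtains the second half from the equivalence already recorded in Example \ref{exam-t-p}. The cocycle verification for $\psi^{\sharp}$ that you propose to carry out is likewise only asserted (not computed) in the paper, so your treatment is, if anything, slightly more complete.
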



It follows from the above theorem that a $\psi$-twisted triangular $r$-matrix ${\bf r} \in \wedge^2 \mathfrak{g}$ induces a differential 
\begin{align*}
d_{{\bf r}^\sharp} : \mathrm{Hom}(\wedge^n \mathfrak{g}^*, \mathfrak{g}) \rightarrow \mathrm{Hom}( \wedge^{n+1} \mathfrak{g}^*, 
\mathfrak{g}),~d_{{\bf r}^\sharp} := l_2 ({\bf  r}^\sharp, -) - \frac{1}{2}~ l_3( {\bf r}^\sharp, {\bf r}^\sharp, - ).
\end{align*}
The corresponding cohomology groups are called the cohomology of the $\psi$-twisted triangular $r$-matric ${\bf r}$. This cohomology is isomorphic to the Chevalley-Eilenberg cohomology of the Lie algebra $\mathfrak{g}^*$ given in (\ref{tw-bracket}) with coefficients in $\mathfrak{g}$ given by
\begin{align*}
\alpha ~\overline{\bullet}~ x = [{\bf r}^\sharp (\alpha), x] + {\bf r}^\sharp ( ad^*_x \alpha + \psi^\sharp (x, {\bf r}^\sharp (\alpha)),~ \text{ for } \alpha \in \mathfrak{g}^*,~ x \in \mathfrak{g}.
\end{align*}
When $\psi = 0$, we get $l_3 = 0$. Therefore, the corresponding cohomology groups are the standard cohomology of the classical triangular $r$-matrix \cite{tang}.

By the same spirit as of section \ref{sec-4}, we may study deformations of a $\psi$-twisted triangular $r$-matrix, their equivalences and rigidity.

\section{NS-Lie algebras}\label{sec-ns}
In this section, we introduce a new algebraic structure which is related to twisted Rota-Baxter operators in the same way pre-Lie algebras are related to Rota-Baxter operators. We call such algebras as NS-Lie algebras.
Further study on NS-Lie algebras is postponed to a forthcoming paper.

\begin{defn}
An {\bf NS-Lie algebra} is a vector space $L$ together with bilinear operations $\circ , \curlyvee : L \times L \rightarrow L$ in which $\curlyvee$ is skew-symmetric and satisfying the following two identities
\begin{itemize}
\item[(NS1)] $Ass (x, y, z) - Ass (y,x,z) + (x \curlyvee y) \circ z = 0$, \\
$\mathrm{ i.e. }~~ (x \circ y) \circ z  - x \circ (y \circ z ) - (y \circ x)\circ z + y \circ ( x \circ z ) + (x \curlyvee y) \circ z = 0,$
\item[(NS2)] $x \curlyvee (y * z) + c. p. + x \circ (y \curlyvee z) + c.p. = 0,$\\
$\mathrm{ i.e. }~~  x \curlyvee (y * z) + y \curlyvee (z * x) + z \curlyvee (x * y) +  x \circ (y \curlyvee z) + y \circ (z \curlyvee x) + z \circ (x \curlyvee y)  = 0,$
\end{itemize}
for all $x, y, z \in L$. Here $x * y = x \circ y - y \circ x + x \curlyvee y$, for $x, y \in L$.
\end{defn}

\begin{remark}
If the bilinear operation $\circ$ in the above definition is trivial, one gets that $(L, \curlyvee)$ is a Lie algebra. On the other hand, if $\curlyvee$ is trivial, then $(L, \circ)$ becomes a pre-Lie algebra. Thus, NS-Lie algebras are a generalization of both Lie algebras and pre-Lie algebras.
\end{remark}

Let us use the following notation in an NS-Lie algebra. For $x \in L$, we consider the map $l_x : L \rightarrow L$, $l_x (y) = x \circ y$, for $y \in L$. Then the identity (NS1) can be simply written as
\begin{align}\label{comm-ns}
[l_x, l_y] = l_{x * y},
\end{align}
where on the left-hand side, we use the commutator bracket of linear maps on $L$.

\begin{prop}
Let $(L, \circ, \curlyvee)$ be an NS-Lie algebra. Then $L$ equipped with the bracket 
\begin{align*}
[x, y] := x \circ y - y \circ x + x \curlyvee y
\end{align*}
is a Lie algebra (called the adjacent Lie algebra of $L$, denoted by $L_{\mathrm{Lie}}$). Morever, the Lie algebra $L_{\mathrm{Lie}}$ has a representation on $L$ given by $x \bullet m = x \circ m$, for $x \in L_{\mathrm{Lie}}$, $m \in L$.
\end{prop}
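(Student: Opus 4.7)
The plan is to verify skew-symmetry of the bracket $[x,y]=x*y$ directly, then establish the Jacobi identity by a bookkeeping argument that separates the contributions of $\circ$ and $\curlyvee$, using (NS1) to replace the $\circ$-contributions by associator-type expressions and (NS2) to kill the $\curlyvee$-contributions. The representation claim will then follow essentially for free from (NS1) in its compact form (\ref{comm-ns}).

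\textbf{Skew-symmetry.} Since $\curlyvee$ is skew-symmetric, $[y,x]=y\circ x-x\circ y+y\curlyvee x=-(x\circ y-y\circ x+x\curlyvee y)=-[x,y]$.

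\textbf{Jacobi.} I would expand
\[
J(x,y,z):=[x,[y,z]]+[y,[z,x]]+[z,[x,y]]
\]
using $[y,z]=y*z$, splitting $J$ into three cyclic sums:
\[
J = \sum_{c.p.} x\circ(y*z) \;-\; \sum_{c.p.} (y*z)\circ x \;+\; \sum_{c.p.} x\curlyvee(y*z).
\]
For the second sum I would apply (NS1) in the form (\ref{comm-ns}), i.e.\ $(y*z)\circ x = y\circ(z\circ x)-z\circ(y\circ x)$. For the first sum I would expand $y*z=y\circ z-z\circ y+y\curlyvee z$ so that it splits as a purely $\circ$-part plus $\sum_{c.p.}x\circ(y\curlyvee z)$. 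The key observation, which I would verify by writing out the six terms in each cyclic sum, is that the purely $\circ$-terms from the first sum exactly cancel the terms coming from (NS1) in the second sum. What remains is
\[
J = \sum_{c.p.} x\circ(y\curlyvee z) \;+\; \sum_{c.p.} x\curlyvee(y*z),
\]
which vanishes by (NS2).

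\textbf{Representation.} With $\bullet$ defined by $x\bullet m=x\circ m$, I must check $[x,y]\bullet m = x\bullet(y\bullet m)-y\bullet(x\bullet m)$ for $x,y,m\in L$. But $[x,y]\bullet m = (x*y)\circ m = l_{x*y}(m)$, and by (\ref{comm-ns}) this equals $[l_x,l_y](m)=x\circ(y\circ m)-y\circ(x\circ m)$, which is exactly the required identity.

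\textbf{Main obstacle.} The only delicate step is the cancellation inside the Jacobi calculation: one has to confirm that the six associator-type terms produced by expanding $\sum_{c.p.}x\circ(y\circ z-z\circ y)$ match, with opposite sign, the six terms produced by $\sum_{c.p.}(y*z)\circ x$ after applying (NS1). This is purely bookkeeping, but it is the place where one has to be careful about signs and cyclic indexing. Once this matches, (NS2) delivers the remaining identity with no further work.
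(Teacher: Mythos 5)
Your proof is correct and follows essentially the same route as the paper: expand the Jacobiator, use (NS1) (in its operator form $l_{x*y}=[l_x,l_y]$) to dispose of the pure-$\circ$ contributions, and observe that the surviving terms $\sum_{c.p.} x\circ(y\curlyvee z)+\sum_{c.p.} x\curlyvee(y*z)$ are exactly the left-hand side of (NS2); the representation claim is likewise read off from (\ref{comm-ns}) in both arguments. Your bookkeeping (cancelling the six associator-type terms against the six terms produced by applying (NS1) to $\sum_{c.p.}(y*z)\circ x$) is a slightly tidier organization of the same cancellation the paper carries out term by term, and it does check out.
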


\begin{proof}
For any $x, y, z \in L$, we have
\begin{align}
[x,[y,z]] =~& [x, y \circ z - z \circ y + y \curlyvee z ] \nonumber \\
=~& x \circ (y \circ z) - x \circ (z \circ y) + x \circ (y \curlyvee z) - (y \circ z) \circ x + (z \circ y) \circ x  - (y \curlyvee z) \circ x + x \curlyvee (y *z). \label{jac-1}
\end{align}
Similarly,
\begin{align}
[y,[z,x]] =~& y \circ (z \circ x) - y \circ (x \circ z) + y \circ (z \curlyvee x) - (z \circ x) \circ y + (x \circ z) \circ y  - (z \curlyvee x) \circ y + y \curlyvee (z *x), \label{jac-2}\\
[z,[x,y]] =~& z \circ (x \circ y) - z \circ (y \circ x) + z \circ (x \curlyvee y) - (x \circ y) \circ z + (y \circ x) \circ z  - (x \curlyvee y) \circ z + z \curlyvee (x *y). \label{jac-3}
\end{align}
By adding (\ref{jac-1}), (\ref{jac-2}), (\ref{jac-3}) and using (NS1), we get
\begin{align*}
[x,[y,z]] + [y,[z,x]] + [z, [x,y]] =~& - \cancel{( z \curlyvee y) \circ x} - \bcancel{(x \curlyvee z) \circ y} - \xcancel{(y \curlyvee x) \circ z} \\&+ x \circ (y \curlyvee z) - \cancel{(y \curlyvee z) \circ x}  + y \circ (z \curlyvee x)  - \bcancel{(z \curlyvee x) \circ y} + z \circ (x \curlyvee y) - \xcancel{(x \curlyvee y) \circ z}  \\   & + x \curlyvee (y *z) + y \curlyvee (z *x) + z \curlyvee (x *y).
\end{align*}
This is $0$ by (NS2). Hence $(L, [~,~])$ is a Lie algebra.

The last part follows from the identity (\ref{comm-ns}).
\end{proof}

A morphism between NS-Lie algebras is a linear map between underlying vector spaces that commute with structure maps. We denote the category of NS-Lie algebras by ${\bf NS}.$ The construction of the above proposition yields a functor $(~)_c : {\bf NS} \rightarrow {\bf Lie}$ from the category of NS-Lie algebras to the category of Lie algebras.

\begin{prop}\label{prop-nij-ns}
Let $(\mathfrak{g}, [~,~])$ be a Lie algebra and $N : \mathfrak{g} \rightarrow \mathfrak{g}$ be a Nijenhuis operator. Then
\begin{align*}
x \circ y = [Nx, y]   \qquad \text{ and } \qquad x \curlyvee y = - N [x, y]
\end{align*}
defines an NS-Lie algebra structure on $\mathfrak{g}$.
\end{prop}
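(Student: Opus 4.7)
The plan is to verify the two defining identities (NS1) and (NS2) directly, by substituting $x \circ y = [Nx, y]$ and $x \curlyvee y = -N[x,y]$ and reducing to the Nijenhuis condition together with the ordinary Jacobi identity for $[~,~]$.

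First, I would observe that the derived product $*$ takes a familiar form:
\begin{align*}
x * y = x \circ y - y \circ x + x \curlyvee y = [Nx, y] + [x, Ny] - N[x, y] = [x, y]_N,
\end{align*}
so the Nijenhuis-deformed bracket will play a central role. For (NS1), after substituting I would gather terms of the form $[\,\cdot\,, z]$ and isolate the factor
\begin{align*}
N[Nx, y] - N[Ny, x] - N^2[x, y],
\end{align*}
which is exactly $[Nx, Ny]$ by the Nijenhuis condition. What remains after this identification is
\begin{align*}
[[Nx, Ny], z] - [Nx, [Ny, z]] + [Ny, [Nx, z]],
\end{align*}
which vanishes by the Jacobi identity for $[~,~]$ applied to the triple $(Nx, Ny, z)$.

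For (NS2) the bookkeeping is heavier but structurally analogous. After substitution, the cyclic sum becomes
\begin{align*}
-\sum_{\text{c.p.}} N\bigl[x, [y,z]_N\bigr] - \sum_{\text{c.p.}} \bigl[Nx, N[y,z]\bigr].
\end{align*}
I would apply the Nijenhuis condition to each bracket $[Nx, N[y,z]]$ in the second sum, using Jacobi to kill the contribution $N^2 \sum_{\text{c.p.}} [x, [y,z]]$. Expanding $[x, [y,z]_N]$ using $[y,z]_N = [Ny, z] + [y, Nz] - N[y,z]$ then cancels the remaining terms involving $N[y,z]$, and reduces the whole expression (up to an overall factor of $-N$) to
\begin{align*}
\sum_{\text{c.p.}} [Nx, [y,z]] + \sum_{\text{c.p.}} [x, [Ny, z]] + \sum_{\text{c.p.}} [x, [y, Nz]].
\end{align*}
The key combinatorial observation is that when one sorts these nine terms by which argument carries the $N$, they regroup into exactly three Jacobi identities: one for $(Nx, y, z)$, one for $(x, Ny, z)$, and one for $(x, y, Nz)$. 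Each such triple gives zero, so the total vanishes.

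The main obstacle I anticipate is not conceptual but organizational: in (NS2) one must keep track of about a dozen nested brackets with correct signs and cyclic positions long enough to recognize that they partition into three standard Jacobi triples. Once this regrouping is spotted, both identities follow with no further input beyond the Nijenhuis relation and the Jacobi identity of $\mathfrak{g}$.
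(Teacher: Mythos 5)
Your proposal is correct: both identities check out, and the key regrouping you describe for (NS2) does work (the nine terms $\sum_{\mathrm{c.p.}}[Nx,[y,z]]+\sum_{\mathrm{c.p.}}[x,[Ny,z]]+\sum_{\mathrm{c.p.}}[x,[y,Nz]]$ partition exactly into the Jacobi triples $(Nx,y,z)$, $(x,Ny,z)$, $(x,y,Nz)$). For (NS1) your argument is essentially the paper's, differing only in the order of operations: you apply the Nijenhuis identity first to recognize $N[Nx,y]-N[Ny,x]-N^2[x,y]=[Nx,Ny]$ and finish with Jacobi, while the paper applies Jacobi first to produce $[[Nx,Ny],z]$ and then expands it via the Nijenhuis condition. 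For (NS2) the routes genuinely diverge: the paper rewrites the cyclic sum as $-N\bigl([x,[y,z]_N]+\mathrm{c.p.}+[x,[y,z]]_N+\mathrm{c.p.}\bigr)$ and quotes the Kosmann-Schwarzbach--Magri compatibility of $[~,~]$ with the deformed bracket $[~,~]_N$ to conclude this vanishes, whereas you expand everything and resolve it into three Jacobi identities by hand. What you have effectively done is reprove that compatibility lemma inline; your version buys a self-contained, elementary argument at the cost of heavier bookkeeping, while the paper's buys brevity and makes the conceptual source of (NS2) (compatibility of the Nijenhuis hierarchy) explicit. Either is acceptable; if you write yours up in full, the only care needed is the sign discipline you already flagged.
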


\begin{proof}
For any $x, y, z \in \mathfrak{g}$, we have
\begin{align*}
&(x \circ y) \circ z  - x \circ (y \circ z ) - (y \circ x)\circ z + y \circ ( x \circ z )\\
&= [N [Nx,y], z] - [Nx,[Ny,z]] - [N[Ny,x],z] + [Ny, [Nx,z]]\\
&= [N [Nx,y], z] - [[Nx, Ny], z] - \cancel{[Ny, [Nx,z]]} - [N[Ny,x],z] + \cancel{[Ny, [Nx,z]]}\\
&= \cancel{[N [Nx,y], z]} - \cancel{[N[Nx,y],z]} + \bcancel{[N[Ny,x],z]} + [N^2[x,y],z]  - \bcancel{[N[Ny,x],z]} \\
&= [N^2[x,y], z] = - [N(x \curlyvee y), z] = - (x \curlyvee y) \circ z.
\end{align*}
Hence we have proved (NS1). To prove (NS2), we first recall that for a Nijenhuis operator $N$, the deformed bracket
\begin{align*}
[x,y]_N := x \circ y - y \circ x + x \curlyvee y = [Nx, y] - [Ny, x] - N[x,y]
\end{align*}
is a Lie bracket on $\mathfrak{g}$ and this Lie bracket is compatible with the given Lie bracket $[~,~]$ on $\mathfrak{g}$ \cite{yks}. In other words, the sum $[~,~] + [~,~]_N$ is also a Lie bracket, equivalently,
\begin{align*}
[x, [y,z]_N] + c.p. + [x, [y,z]]_N + c.p. = 0.
\end{align*}
Thus, we have
\begin{align*}
x \curlyvee (y * z) + c.p. + x \circ ( y \curlyvee z) + c.p. =~& - N [x, [y,z]_N] + c.p. - [Nx, N[y,z]] + c.p. \\
=~& - N [x, [y,z]_N] + c.p. - N [x, [y,z]]_N + c.p. = 0.
\end{align*}
Hence the result is proved.
\end{proof}

In \cite{leroux} Leroux introduced NS-algebras as a generalization of dendriform algebras. We show that NS-algebras give rise to NS-Lie algebras.

\begin{defn}
An (associative) NS-algebra is a vector space $A$ together with three bilinear operations $\prec, \succ, \square : A \otimes A \rightarrow A$ satisfying the following identities
\begin{align}
( x \prec y ) \prec z = x \prec ( y \circledast z ), ~~~~ (x \succ y) \prec z = x \succ ( y \prec z), ~~~~ ( x \circledast y) \succ z = x \succ ( y \succ z), \\
( x ~\square~ y ) \prec z + ( x \circledast y) ~\square~ z = x \succ ( y ~\square ~z ) + x ~\square~ ( y \circledast z),~~~ \text{ for } x, y, z \in A.  \label{ns-ler-2}
\end{align}
Here $\circledast : A \otimes A \rightarrow A$ is the map defined by $x \circledast y = x \prec y + x \succ y + x ~\square~ y$, for $x, y \in A$.
\end{defn}

\begin{prop}
Let $(A, \prec, \succ, \square)$ be an (associative) NS-algebra. Then $(A, \circ, \curlyvee)$ is an NS-Lie algebra, where
\begin{align*}
x \circ y = x \succ y - y \prec x ~~~~ \text{ and } ~~~~ x \curlyvee y = x ~\square~ y - y ~ \square ~ x. 
\end{align*}
\end{prop}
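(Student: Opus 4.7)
The proof amounts to checking that $\curlyvee$ is skew-symmetric --- immediate from $x \curlyvee y = x \square y - y \square x$ --- and that identities (NS1) and (NS2) hold. My approach is direct expansion in $\prec, \succ, \square$, followed by systematic application of the four NS-algebra axioms. The one useful preliminary observation is the identity
\[ x * y \;=\; x \circ y - y \circ x + x \curlyvee y \;=\; x \circledast y - y \circledast x, \]
obtained by simply regrouping the six defining summands. In particular, the adjacent Lie bracket I am building coincides with the commutator bracket of the (associative) algebra $(A, \circledast)$, and the computations below acquire a lot of structure from this.

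For (NS1), I would expand the entire expression $(x \circ y) \circ z - x \circ (y \circ z) - (y \circ x) \circ z + y \circ (x \circ z) + (x \curlyvee y) \circ z$ into monomials in $\prec, \succ, \square$ and sort them by the outermost operation. The four $(a \succ b) \succ c$-type monomials reduce via the axiom $(x \circledast y) \succ z = x \succ (y \succ z)$ to residues whose $\prec$-parts cancel the $(a \succ b) \prec c$-type terms already present (using $(x \succ y) \prec z = x \succ (y \prec z)$), and whose $\square$-parts are exactly neutralised by the $(x \curlyvee y) \circ z$ piece. The remaining outer-$\prec$ terms, of type $(z \prec y) \prec x$, are treated by $(x \prec y) \prec z = x \prec (y \circledast z)$ with the inner $\circledast$ split as $\prec + \succ + \square$; every resulting monomial matches, with the correct sign, one of the monomials coming from the associators or from the $\alpha(z, \gamma(\cdot,\cdot))$-piece of $(x \curlyvee y) \circ z$, so the whole sum collapses to zero.

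For (NS2), I substitute $x * y = x \circledast y - y \circledast x$ on the left, turning $\sum_{\mathrm{cyc}} x \curlyvee (y * z)$ into a cyclic sum of $x \square (y \circledast z) - (y \circledast z) \square x$ together with its $y \leftrightarrow z$ swap. The axiom
\[ (x \square y) \prec z + (x \circledast y) \square z \;=\; x \succ (y \square z) + x \square (y \circledast z) \]
is then used to rewrite every $x \succ (y \square z)$ and $x \succ (z \square y)$ appearing inside $\sum_{\mathrm{cyc}} x \circ (y \curlyvee z)$. Under the cyclic sum, the relabelling identity $\sum_{\mathrm{cyc}} F(x \circledast y, z) = \sum_{\mathrm{cyc}} F(y \circledast z, x)$ pairs the $\square(\cdot \circledast \cdot, \cdot)$- and $\square(\cdot, \cdot \circledast \cdot)$-terms produced by (d) against their counterparts coming from the first sum, while the surviving $\prec(\square(\cdot, \cdot), \cdot)$-sums cancel in antisymmetric pairs.

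The main obstacle is not conceptual but bookkeeping: each identity expands into on the order of twenty monomials in $\prec, \succ, \square$, and one has to apply the four NS-axioms in the correct order to force the cancellations. Of the two, (NS2) is the more delicate one, because axiom (d) is the only NS-axiom that entangles all three operations, so its cyclic application must be tracked carefully; once it has been applied in every cyclic slot, however, the final collapse is forced by a single relabelling of the cyclic variable.
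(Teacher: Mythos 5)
Your proposal is correct and follows essentially the same route as the paper: for (NS1) a direct expansion into $\prec,\succ,\square$-monomials reduced by the three ``dendriform-type'' axioms so that only the $\square$-terms survive and are absorbed by $(x\curlyvee y)\circ z$, and for (NS2) the observation that $x*y=x\circledast y-y\circledast x$ combined with the cyclic sum of the fourth axiom and its $y\leftrightarrow z$ antisymmetrization. The paper runs the (NS2) argument from the axiom toward the identity rather than the reverse, but the content is identical.
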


\begin{proof}
By a direct calculation, we have
\begin{align*}
Ass (x, y, z) = ( x \circ y) \circ z - x \circ ( y \circ z) 
&= ( x \succ y) \succ z - ( y \prec x) \succ z - z \prec ( x \succ y) + z \prec ( y \prec x) \\
&- x \succ ( y \succ z) + x \succ ( z \prec y) + ( y \succ z ) \prec x - ( z \prec y) \prec x \\
&= - ( x \prec y) \succ z - ( x~ \square~ y) \succ z  - ( y \prec x) \succ z - z \prec ( x \succ y) \\
&+ x \succ (z \prec y) + ( y \succ z) \prec x - z \prec ( y \succ x) - z \prec ( y ~\square ~ x).
\end{align*}
Therefore,
\begin{align*}
Ass (x,y,z) - Ass (y, x, z) &= - ( x \prec y) \succ z - ( x~ \square~ y) \succ z  - ( y \prec x) \succ z - z \prec ( x \succ y) \\
& + x \succ (z \prec y) + ( y \succ z) \prec x - z \prec ( y \succ x) - z \prec ( y ~\square ~ x) \\
&+ ( y \prec x) \succ z + ( y~ \square~ x) \succ z  + ( x \prec y) \succ z + z \prec ( y \succ x) \\
& - y \succ (z \prec x) - ( x \succ z) \prec y + z \prec ( x \succ y) + z \prec ( x ~\square ~ y) \\
&= - ( x ~ \square~ y) \succ z - z \prec ( y ~\square~ x) + ( y ~\square~x ) \succ z + z \prec ( x ~\square~y) \\
&= - ( x \curlyvee y) \succ z + z \prec ( x \curlyvee y) = - ( x \curlyvee y) \circ z. 
\end{align*}
Thus we have proved (NS1). Next, we have from (\ref{ns-ler-2}) that
\begin{align*}
x~ \square~ ( y \circledast z ) - ( x \circledast y )~ \square~ z + x \succ ( y ~ \square ~z) - ( x ~ \square~ y) \prec z = 0,\\
y~ \square~ ( z \circledast x ) - ( y \circledast z )~ \square~ x + y \succ ( z ~ \square ~x) - ( y ~ \square~ z) \prec x = 0,\\
z~ \square~ ( x \circledast y ) - (z \circledast x )~ \square~ y + z \succ ( x ~ \square ~y) - ( z ~ \square~ x) \prec y = 0.
\end{align*}
By adding all these, we get
\begin{align}\label{onon}
 x \curlyvee ( y \circledast z ) + y \curlyvee ( z \circledast x ) + z \curlyvee ( x \circledast y ) + x \circ ( y ~\square~z) + y \circ ( z ~\square~ x) + z \circ ( x~\square~ y) =0.
\end{align}
Interchanging $y$ and $z$, we get
\begin{align}\label{toto}
x \curlyvee ( z \circledast y ) + z \curlyvee ( y \circledast x ) + y \curlyvee ( x \circledast z ) + x \circ ( z ~\square~y) + z \circ ( y ~\square~ x) + y \circ ( x~\square~ z) =0.
\end{align}
Finally, by subtracting (\ref{toto}) from (\ref{onon}), we get (NS2). This completes the proof.
\end{proof}


\subsection{Relation with twisted Rota-Baxter operators}

\begin{prop}\label{prop-tw-ns}
Let $T : M \rightarrow \mathfrak{g}$ be a $H$-twisted Rota-Baxter operator. Then
\begin{align*}
u \circ v  = T(u) \bullet v   \qquad \text{ and } \qquad  u \curlyvee v = H (Tu, Tv), ~ \text{ for } u, v \in M,
\end{align*}
defines an NS-Lie algebra structure on $M$.
\end{prop}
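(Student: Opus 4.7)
The plan is to verify the two defining axioms (NS1) and (NS2) of an NS-Lie algebra directly from the definitions $u \circ v = T(u) \bullet v$ and $u \curlyvee v = H(Tu, Tv)$, using only two inputs: the $H$-twisted Rota-Baxter identity satisfied by $T$ and the $2$-cocycle condition satisfied by $H$. Since both identities to be checked are of low degree, I expect no serious combinatorial difficulty; the main content is to correctly recognize which known identity collapses each side.

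For (NS1), I would unfold all five terms and observe that four of them, namely
\[
T(T(u)\bullet v)\bullet w - T(T(v)\bullet u)\bullet w + T(H(Tu,Tv))\bullet w,
\]
combine under $T$ into $T\bigl(T(u)\bullet v - T(v)\bullet u + H(Tu,Tv)\bigr)\bullet w$, which by the $H$-twisted Rota-Baxter identity equals $[T(u),T(v)]\bullet w$. What remains to check is then exactly
\[
[T(u),T(v)]\bullet w - T(u)\bullet(T(v)\bullet w) + T(v)\bullet(T(u)\bullet w) = 0,
\]
which is the representation axiom of $M$ over $\mathfrak{g}$. So (NS1) reduces to the module identity after one application of the defining equation (\ref{iden-H-twisted}).

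For (NS2), I would first observe that the derived bracket $u*v = u\circ v - v\circ u + u\curlyvee v$ is precisely the Lie bracket $[u,v]_T$ of Proposition \ref{graph-twisted-new}, and that $T([u,v]_T) = [Tu,Tv]$ by the twisted Rota-Baxter identity. Substituting this, the left-hand side of (NS2) becomes
\[
H(Tu,[Tv,Tw]) + H(Tv,[Tw,Tu]) + H(Tw,[Tu,Tv]) + T(u)\bullet H(Tv,Tw) + T(v)\bullet H(Tw,Tu) + T(w)\bullet H(Tu,Tv),
\]
which is exactly $(\delta_{\mathrm{CE}} H)(Tu,Tv,Tw)$ and hence vanishes because $H$ is a $2$-cocycle.

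The only mild obstacle I anticipate is bookkeeping: making sure the signs and cyclic permutations in (NS2) are aligned with the skew-symmetry of $H$ so that the six terms really match the Chevalley-Eilenberg $\delta_{\mathrm{CE}}$-formula evaluated at $(Tu,Tv,Tw)$. Once that identification is made explicit, the proof is essentially a two-line reduction: (NS1) is the module axiom pushed through $T$, and (NS2) is the pullback of the cocycle condition for $H$ along $T\otimes T\otimes T$.
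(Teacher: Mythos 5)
Your proposal is correct and follows essentially the same route as the paper: (NS1) is verified by trading $-T(u)\bullet(T(v)\bullet w)+T(v)\bullet(T(u)\bullet w)$ for $-[Tu,Tv]\bullet w$ via the module axiom and then applying the twisted Rota--Baxter identity, and (NS2) is reduced, via $T(u*v)=[Tu,Tv]$, to the $2$-cocycle condition for $H$ evaluated at $(Tu,Tv,Tw)$. The only difference is cosmetic bookkeeping in which direction the cancellations in (NS1) are organized.
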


\begin{proof}
For any $u,v, w \in M,$
\begin{align*}
&(u \circ v ) \circ w - u \circ (v \circ w) - (v \circ u) \circ w + v \circ (u \circ w) \\
&= T(T(u) \bullet v) \bullet  w - T(u) \bullet (T(v) \bullet w) - T(T(v) \bullet u)\bullet w + T (v) \bullet (T(u) \bullet w) \\
&= T (T(u) \bullet v) \bullet w - [Tu,Tv] \bullet w - \cancel{T (v) \bullet (T(u) \bullet w)}  - T(T(v) \bullet u)\bullet w + \cancel{T (v) \bullet (T(u) \bullet w)}  \\
&= \cancel{ T (T(u) \bullet v) \bullet w } - \cancel{T(T(u) \bullet v) \bullet w} + \bcancel{T(T(v) \bullet u) \bullet w}  - TH (Tu,Tv) \bullet w - \bcancel{T(T(v) \bullet u)\bullet w} \\
&= - TH (Tu,Tv)\bullet w = - (u \curlyvee v) \circ w.
\end{align*}
Thus the condition (NS1) holds. For (NS2), we observe that
\begin{align*}
&u \curlyvee (v * w ) + c.p. + u \circ (v \curlyvee w) + c.p.\\
&= H (Tu, T (v * w)) + c.p. + T(u) \bullet H (Tv, Tw) + c.p.\\
&= H (Tu, [Tv, Tw]) + c.p. + T(u) \bullet H(Tv, Tw) + c.p. = 0 ~~~~ (\text{as } H \text{ is a } 2\text{-cocycle}).
\end{align*}
Hence $(M, \circ , \curlyvee)$ is an NS-Lie algebra.
\end{proof}

\begin{remark}
Let $N : \mathfrak{g} \rightarrow \mathfrak{g}$ be a Nijenhuis operator on a Lie algebra $(\mathfrak{g}, [~,~])$. It follows from Example \ref{exam-n-tw} and Proposition \ref{prop-tw-ns} that the vector space $\mathfrak{g}$ carries an NS-Lie algebra structure $(\mathfrak{g}, \diamond, \curlyvee)$ given by
\begin{align*}
x \diamond y = \mathrm{id}(x) \bullet y = x \bullet y = [Nx, y] ~~~ \text{ and } ~~~ x \curlyvee y = H (\mathrm{id}(x), \mathrm{id}(y)) = H (x, y) = - N [x, y].
\end{align*}
This NS-Lie algebra structure on $\mathfrak{g}$ is precisely the one obtained in Proposition \ref{prop-nij-ns}.
\end{remark}


Let $T: M \rightarrow \mathfrak{g}$ be a $H$-twisted Rota-Baxter operator. Suppose $\mathfrak{g}'$ is another Lie algebra, $M'$ is a $\mathfrak{g}'$-module and $H' \in C^2_{\mathrm{CE}}(\mathfrak{g}', M')$ is a $2$-cocycle. Let $T' : M' \rightarrow \mathfrak{g}'$ be a $H'$-twisted Rota-Baxter operator. A weak morphism from $T$ to $T'$ consist of linear maps $\phi : \mathfrak{g} \rightarrow \mathfrak{g}'$ and $\psi : M \rightarrow M'$ satisfying $\psi ( x \bullet u) = \phi(x) \bullet \psi (u)$, $\psi  \circ H  = H' \circ ( \phi \otimes \phi)$ and $\phi \circ T = T' \circ \psi$, for $x\in \mathfrak{g}$ and $u \in M$. Thus, a morphism of twisted Rota-Baxter operators in the sense of Definition \ref{rota-morphism} is a weak morphism $(\phi, \psi)$ in which $\phi$ is a Lie algebra morphism.

Let $\bf{ TRB}$ denotes the category of twisted Rota-Baxter operators and weak morphisms between them. In the previous proposition, we construct an NS-Lie algebra from a twisted Rota-Baxter operator. In fact, this construction gives rise to a functor $F : {\bf TRB} \rightarrow {\bf NS}$.

Let $(L, \circ, \curlyvee)$ be an NS-Lie algebra with the adjacent Lie algebra $L_{\mathrm{Lie}} = (L, [~,~])$. Then it has been shown that the Lie algebra $L_{\mathrm{Lie}}$ has a (well-defined) representation on $L$ given by
\begin{align*}
x \bullet u := x \circ u,~ \text{ for } x \in L_{\mathrm{Lie}},~ u \in L.
\end{align*}
With this notation, the map $H : L_{\mathrm{Lie}} \times L_{\mathrm{Lie}} \rightarrow L$, $H(x,y) = x \curlyvee y$ is a $2$-cocycle in the Chevalley-Eilenberg cohomology of the Lie algebra $L_{\mathrm{Lie}}$ with coefficients in the representation $L$. It is easy to see that the condition (NS2) is equivalent to the fact that $H$ is a $2$-cocycle. The identity map $1 : L \rightarrow L_{\mathrm{Lie}}$ is then a $H$-twisted Rota-Baxter operator. This construction of a twisted Rota-Baxter operator from an NS-Lie algebra is also functorial and defines a  functor $G : {\bf NS} \rightarrow {\bf TRB}$. Moreover, we get an adjoint relation
\begin{align*}
\mathrm{Hom}_{\bf NS} (L, F (T)) \cong \mathrm{Hom}_{\bf TRB}(G(L), T),
\end{align*}
where $T$ is a $H$-twisted Rota-Baxter operator and $L$ is any NS-Lie algebra.

\section{Twisted generalized complex structures}\label{sec-tgcs}

In this section, we introduce twisted generalized complex structures on modules over a Lie algebra. When one considers the coadjoint module $\mathfrak{g}^*$ of a Lie algebra $\mathfrak{g}$, one gets twisted generalized complex structures on $\mathfrak{g}$.

Let $\mathfrak{g}$ be a Lie algebra, $M$ a $\mathfrak{g}$-module and $H \in C^2_{\mathrm{CE}} (\mathfrak{g}, M)$ be a $2$-cocycle. Consider the semi-direct product Lie algebra $\mathfrak{g} \ltimes_H M$ with the bracket given in (\ref{semi-dir-brkt}). Any linear map $J: \mathfrak{g} \oplus M \rightarrow \mathfrak{g} \oplus M$ must be of the form
\begin{align}\label{gcs-o}
J = \left( \begin{array}{cc}
N    &      T\\
\sigma     &    -S
\end{array}
\right),
\end{align}
for some linear maps $N \in \mathrm{End} (\mathfrak{g}), S \in \mathrm{End} (M),~ T : M \rightarrow \mathfrak{g}$ and $\sigma : \mathfrak{g} \rightarrow M$. These maps are called the structure components of $J$. The reason behind considering the linear map as $-S$ (instead of $S$) will be clear from Proposition \ref{gcs-o-gcs-l}.

\begin{defn}\label{defn-gcs-o}
An $H$-twisted generalized complex structure on $M$ over the Lie algebra $\mathfrak{g}$ is a linear map $J : \mathfrak{g} \oplus M \rightarrow \mathfrak{g} \oplus M$ that satisfies the following conditions
\begin{itemize}
\item[(i)] $J$ is almost complex: ~ $J^2 = - \mathrm{id}$,
\item[(ii)] integrability condition: ~
$[Jr, Js] - [r,s] - J ([Jr, s]+ [r, Js]) = 0, ~ \text{for } r, s \in \mathfrak{g} \oplus M.$
\end{itemize}
\end{defn}

Note that we have used the $H$-twisted semi-direct product bracket (\ref{semi-dir-brkt}) in the above integrability condition. A generalized complex structure on $M$ over the Lie algebra $\mathfrak{g}$ is a $H$-twisted generalized complex structure for $H = 0$.

In \cite{crainic} Crainic gives a characterization of generalized complex manifolds. A similar theorem in the context of $H$-twisted generalized complex structures is the following.

\begin{thm}\label{gcs-o-char}
A linear map $J :  \mathfrak{g} \oplus M \rightarrow \mathfrak{g} \oplus M$ of the form (\ref{gcs-o}) is a $H$-twisted generalized complex structure on $M$ over the Lie algebra $\mathfrak{g}$ if and only if its structure components satisfy the following identities:
\begin{align}
&NT = TS,\label{gcs-equiv-1}\\
&N^2 + T \sigma = -\mathrm{id},\\
&S \sigma = \sigma N, \\
&S^2 + \sigma T = - \mathrm{id}, \label{gcs-equiv-4}\\
&[Tu, Tv] = T (Tu \bullet v - Tv \bullet u), \label{gcs-equiv-5}\\
&Tu \bullet Sv - Tv \bullet Su - H (Tu, Tv) = S (Tu \bullet v - Tv \bullet u), \label{gcs-equiv-6}\\
&[Nx, Tu] - N [x, Tu] = T ( Nx \bullet u - x \bullet Su + H (x, Tu)), \label{gcs-equiv-7}\\
&\sigma [Tu, x] - Tu \bullet \sigma x - H(Tu, Nx) = x \bullet u + Nx \bullet Su - S ( Nx \bullet u - x \bullet Su + H (x, Tu)), \label{gcs-equiv-8}\\
&[Nx, Ny] - [x, y] - N ([Nx, y]+ [x, Ny]) = T ( x \bullet \sigma y - y \bullet \sigma x  + H (x, Ny) - H (y,Nx)), \label{gcs-equiv-9}\\
&Nx \bullet \sigma y - Ny \bullet \sigma x + H (Nx, Ny) - H (x,y) - \sigma ([Nx, y]+ [x, Ny])= \label{gcs-equiv-10} \\& \qquad \qquad \qquad - S (x \bullet \sigma y - y \bullet \sigma x + H (x, Ny) - H (y,Nx)), \nonumber
\end{align}
for all $x, y \in \mathfrak{g}$ and $u, v \in M$.
\end{thm}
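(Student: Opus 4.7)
The plan is to expand each of the two defining conditions of Definition \ref{defn-gcs-o} --- the almost-complex identity $J^2 = -\mathrm{id}$ and the integrability condition --- directly using the block form (\ref{gcs-o}) of $J$ and the explicit twisted semi-direct product bracket (\ref{semi-dir-brkt}), and verify that together they are equivalent to the ten identities (\ref{gcs-equiv-1})--(\ref{gcs-equiv-10}).

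A routine block computation gives
\[
J^2 = \begin{pmatrix} N^2 + T\sigma & NT - TS \\ \sigma N - S\sigma & \sigma T + S^2 \end{pmatrix}.
\]
Matching this against $-\mathrm{id}_{\mathfrak{g} \oplus M}$ entry-by-entry produces exactly the four equations (\ref{gcs-equiv-1})--(\ref{gcs-equiv-4}). (This is also where the sign convention $-S$ in (\ref{gcs-o}) pays off, giving clean off-diagonal entries.)

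For integrability, by bilinearity of $[~,~]^H$ and of $J$ it suffices to verify that
\[
\mathcal{I}(r,s) := [Jr, Js]^H - [r,s]^H - J\bigl([Jr,s]^H + [r,Js]^H\bigr) = 0
\]
on the three types of pure pairs $(r,s) = ((0,u),(0,v))$, $((x,0),(0,u))$, and $((x,0),(y,0))$. For each pair I would compute $Jr$ and $Js$ via (\ref{gcs-o}), expand the three brackets via (\ref{semi-dir-brkt}), apply $J$ once more using (\ref{gcs-o}) again, and then read off the $\mathfrak{g}$-component and $M$-component of $\mathcal{I}(r,s)$ separately.

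The pair $((0,u),(0,v))$ yields (\ref{gcs-equiv-5}) from the $\mathfrak{g}$-component and (\ref{gcs-equiv-6}) from the $M$-component; the pair $((x,0),(0,u))$ yields (\ref{gcs-equiv-7}) and (\ref{gcs-equiv-8}); and the pair $((x,0),(y,0))$ yields (\ref{gcs-equiv-9}) and (\ref{gcs-equiv-10}). In the last two cases one must use the skew-symmetries of $H$ and of $[~,~]$ to rewrite terms such as $H(Tu,Nx)$ as $-H(Nx,Tu)$ and $[Tu,x]$ as $-[x,Tu]$ in order to match the format of the theorem statement. The main obstacle is not conceptual but purely bookkeeping: keeping careful track of the minus sign on the $S$-block in (\ref{gcs-o}), the skew-symmetry of $H$, and the many $\bullet$-actions that appear when $J$ is applied to $[Jr,s]^H + [r,Js]^H$. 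With this care, every term in $\mathcal{I}(r,s)$ lands in precisely one of (\ref{gcs-equiv-5})--(\ref{gcs-equiv-10}), and conversely these together with (\ref{gcs-equiv-1})--(\ref{gcs-equiv-4}) imply $J^2 = -\mathrm{id}$ and full integrability of $J$ on all of $\mathfrak{g} \oplus M$ by the bilinearity reduction.
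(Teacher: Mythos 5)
Your proposal is correct and follows essentially the same route as the paper: the block computation of $J^2=-\mathrm{id}$ gives (\ref{gcs-equiv-1})--(\ref{gcs-equiv-4}), and evaluating the (skew-bilinear) integrability expression on the three types of pure pairs $(u,v)$, $(x,u)$, $(x,y)$ and separating $\mathfrak{g}$- and $M$-components gives (\ref{gcs-equiv-5})--(\ref{gcs-equiv-10}). The only difference is that you make the bilinearity reduction explicit, which the paper leaves implicit.
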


\begin{proof}
The condition $J^2 = - \mathrm{id}$ is same as
\begin{align*}
\left( \begin{array}{c}
N^2(x) + NT (u) + T \sigma (x) - TS (u)  \\
\sigma N (x) + \sigma T(u) - S \sigma (x) + S^2 (u)    
\end{array}
\right) = \left( \begin{array}{c}
-x  \\
-u   
\end{array}
\right),
\end{align*}
which is equivalent to the identities (\ref{gcs-equiv-1})-(\ref{gcs-equiv-4}). Next consider the integrability condition of  $J$. For $r=u, ~ s = v \in M$, the integrability criteria is equivalent to (\ref{gcs-equiv-5}) and (\ref{gcs-equiv-6}). For $r = x \in \mathfrak{g}$ and $s = u \in M$, the integrability is equivalent to (\ref{gcs-equiv-7}) and (\ref{gcs-equiv-8}). Finally, for $r = x ,~ s = y \in \mathfrak{g}$, we get the identities (\ref{gcs-equiv-9}) and (\ref{gcs-equiv-10}).
\end{proof}

\begin{remark}
Note that the condition (\ref{gcs-equiv-5}) implies that the map $T: M \rightarrow \mathfrak{g}$ is a Rota-Baxter operator. Moreover, the condition (\ref{gcs-equiv-6}) is equivalent to the fact that
\begin{align*}
\mathrm{Gr}((T,S)) = \{ ( Tu, Su)|~ u \in M \} \subset \mathfrak{g} \oplus M
\end{align*}
is a subalgebra of the $(-H)$-twisted semi-direct product $\mathfrak{g} \ltimes_{-H} M$.
\end{remark}

The above theorem ensures the following examples of (twisted) generalized complex structures on modules over Lie algebras.

\begin{exam} (Opposite g.c.s.) Let $J = \left( \begin{array}{cc}
N   &      T\\
\sigma     &   -S
\end{array}
\right)$ be a $H$-twisted generalized complex structure on $M$ over $\mathfrak{g}$. Then $\overline{J} = \left( \begin{array}{cc}
N   &      - T\\
- \sigma     &   -S
\end{array}
\right)$ is a $(-H)$-twisted generalized complex structure, called the opposite of $J$.
\end{exam}

\begin{exam}
Let $T: M \rightarrow \mathfrak{g}$ be an invertible Rota-Baxter operator. Then $J = \left( \begin{array}{cc}
0   &      T\\
- T^{-1}     &   0
\end{array}
\right)$ is a generalized complex structure on $M$ over $\mathfrak{g}$ ($H = 0$).
\end{exam}

Complex structures on modules over Lie algebra also lie in generalized complex structures. We start with the following known definition.

\begin{defn}
A complex structure on a Lie algebra $\mathfrak{g}$ is a linear map $I : \mathfrak{g} \rightarrow \mathfrak{g}$ satisfying $I^2 = - \mathrm{id}$ and
\begin{align*}
[Ix, Iy] - [x, y] - I ( [Ix, y]+[x, Iy]) = 0, ~ \text{for } x, y \in \mathfrak{g}.
\end{align*}
\end{defn}

\begin{defn}
Let $\mathfrak{g}$ be a Lie algebra and $M$ be a $\mathfrak{g}$-module. A complex structure on $M$ over the Lie algebra $\mathfrak{g}$ is a pair $(I, I_M)$ of linear maps $I \in \mathrm{End}(\mathfrak{g})$ and $I_M \in \mathrm{End}(M)$ satisfying the followings
\begin{itemize}
\item[$\triangleright$] $I$ is a complex structure on $\mathfrak{g}$,
\item[$\triangleright$] $I_M^2 = - \mathrm{id}$ and  \begin{equation}\label{iden-s}
I(x) \bullet I_M (u) - x \bullet u - I_M ( I(x) \bullet u + x \bullet I_M (u)) = 0, \text{ for }  x \in \mathfrak{g}, u \in M.
\end{equation}
\end{itemize}
\end{defn}

\begin{prop}
A pair $(I, I_M)$ is a complex structure on $M$ over the Lie algebra $\mathfrak{g}$ if and only if $I \oplus I_M : \mathfrak{g} \oplus M \rightarrow \mathfrak{g} \oplus M$ is a complex structure on the semi-direct product Lie algebra $\mathfrak{g} \ltimes M$.
\end{prop}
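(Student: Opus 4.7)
The plan is to recognize that the map $I \oplus I_M$ is exactly the operator $J$ of the form (\ref{gcs-o}) with structure components $N = I$, $T = 0$, $\sigma = 0$ and $S = -I_M$ (the sign coming from the $-S$ convention in (\ref{gcs-o})). Since the semi-direct product Lie algebra $\mathfrak{g} \ltimes M$ corresponds to the $H$-twisted semi-direct product with $H = 0$, a complex structure on $\mathfrak{g} \ltimes M$ is precisely a $0$-twisted generalized complex structure. Hence Theorem \ref{gcs-o-char} with $H = 0$ can be applied directly, and the proof reduces to checking that, under the substitution $(N,T,\sigma,S) = (I, 0, 0, -I_M)$, the ten identities (\ref{gcs-equiv-1})--(\ref{gcs-equiv-10}) collapse to exactly the four conditions listed in the definition of a complex structure on $M$ over $\mathfrak{g}$.

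I would go through the ten identities in order. With $T = 0$ and $\sigma = 0$, conditions (\ref{gcs-equiv-1}), (\ref{gcs-equiv-3}), (\ref{gcs-equiv-5}), (\ref{gcs-equiv-6}), (\ref{gcs-equiv-7}) and (\ref{gcs-equiv-10}) reduce to $0 = 0$ and impose nothing. What remains:
\begin{itemize}
\item[(i)] (\ref{gcs-equiv-2}) becomes $N^2 = -\mathrm{id}$, i.e.\ $I^2 = -\mathrm{id}$.
\item[(ii)] (\ref{gcs-equiv-4}) becomes $S^2 = -\mathrm{id}$, i.e.\ $(-I_M)^2 = I_M^2 = -\mathrm{id}$.
\item[(iii)] (\ref{gcs-equiv-9}) becomes $[Ix,Iy] - [x,y] - I([Ix,y]+[x,Iy]) = 0$, which is integrability of $I$.
\item[(iv)] (\ref{gcs-equiv-8}) becomes $0 = x \bullet u + Ix \bullet (-I_M u) + I_M(Ix \bullet u - x \bullet (-I_M u))$, i.e.\ $Ix \bullet I_M u - x \bullet u - I_M(Ix \bullet u + x \bullet I_M u) = 0$, which is precisely (\ref{iden-s}).
\end{itemize}
Conversely, given the four conditions defining a complex structure on $M$ over $\mathfrak{g}$, all ten identities of Theorem \ref{gcs-o-char} are satisfied (the nontrivial ones by (i)--(iv), the rest trivially).

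The main (minor) obstacle is just keeping track of the sign convention: because the matrix form uses $-S$ rather than $S$, one must substitute $S = -I_M$, and this sign has to be handled carefully in (\ref{gcs-equiv-8}) in order to land on exactly (\ref{iden-s}) rather than its negative. Other than that, the proof is pure bookkeeping, and it would be equally feasible (and about as long) to simply compute $(I \oplus I_M)^2$ and the integrability expression $[Jr, Js] - [r,s] - J([Jr,s] + [r,Js])$ directly on a general pair $r = (x,u),\ s = (y,v) \in \mathfrak{g} \oplus M$ using (\ref{semi-dir-brkt}) with $H=0$, split into the $\mathfrak{g}$- and $M$-components, and read off $I^2 = -\mathrm{id}$, $I_M^2 = -\mathrm{id}$, the integrability of $I$, and (\ref{iden-s}) after setting one of $u,v$ to zero to decouple the two copies of (\ref{iden-s}) that appear.
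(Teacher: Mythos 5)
Your argument is correct, but it takes a different route from the paper. The paper proves this proposition by direct computation: it expands $(I \oplus I_M)^2 = -\mathrm{id}$ into $I^2 = -\mathrm{id}$ and $I_M^2 = -\mathrm{id}$, then writes out the integrability expression $[Jr,Js]-[r,s]-J([Jr,s]+[r,Js])$ on a general pair $(x,u),(y,v)$ using the semi-direct product bracket, and reads off the integrability of $I$ from the $\mathfrak{g}$-component and identity (\ref{iden-s}) from the $M$-component. That is exactly the alternative you sketch in your last paragraph. Your primary route instead observes that a complex structure on $\mathfrak{g}\ltimes M$ is the same thing as a $0$-twisted generalized complex structure in the sense of Definition \ref{defn-gcs-o}, and then specializes Theorem \ref{gcs-o-char} to $(N,T,\sigma,S)=(I,0,0,-I_M)$ with $H=0$; your bookkeeping of which of (\ref{gcs-equiv-1})--(\ref{gcs-equiv-10}) become vacuous is accurate, and your sign handling in (\ref{gcs-equiv-8}) correctly lands on (\ref{iden-s}) rather than its negative. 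The trade-off: your reduction avoids any new computation but rests on the full strength of the ten-identity characterization (whose proof is itself the same kind of expansion), whereas the paper's direct computation is self-contained and, for this very degenerate choice of structure components, arguably shorter. Both are valid proofs of the statement.
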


\begin{proof}
The linear map $I \oplus I_M$ is a complex structure on the semi-direct product $\mathfrak{g} \ltimes M$ if and only if $(I \oplus I_M)^2 = - \mathrm{id}$ (equivalently, $I^2 = -\mathrm{id}$ and $I_M^2 = - \mathrm{id})$ and
\begin{align*}
[ (Ix, I_M u), (Iy, I_M v)] - [(x,u), (y, v)] - (I \oplus I_M ) ( [ (Ix, I_M u), (y, v)] + [(x, u), (Iy, I_M v)] ) = 0,
\end{align*}
or, equivalently, 
\begin{align*}
\big( [Ix, Iy] - [x, y] - I ([Ix, y] + [x, Iy]), ~& Ix \bullet I_M v - Iy \bullet I_M v - (x \bullet v - y \bullet u) \\ &- I_M (Ix \bullet v - y \bullet I_M u + x \bullet I_M v - Iy \bullet u) \big) = 0.
\end{align*}
The last identity is equivalent to the fact that $I$ is a complex structure on $\mathfrak{g}$ and the identity (\ref{iden-s}) holds. In other words $(I, I_M)$ is a complex structure on $M$ over the Lie algebra $\mathfrak{g}$.
\end{proof}

\begin{exam}
Let $(I, I_M)$ be a complex structure on $M$ over the Lie algebra $\mathfrak{g}$. Then $J = \left( \begin{array}{cc}
I   &      0\\
0     &   I_M
\end{array}
\right)$ is a generalized complex structure on $M$ over $\mathfrak{g}$. Note that here $S= - I_M.$
\end{exam}


Next, we consider twisted generalized complex structures on Lie algebras and show that they are related to twisted generalized complex structures on coadjoint modules.
Let $\mathfrak{g}$ be a Lie algebra and $\psi \in \wedge^3 \mathfrak{g}^*$ be a $3$-cocycle in the cohomology of $\mathfrak{g}$ with coefficients in $\mathbb{K}$.  Consider the coadjoint representation of $\mathfrak{g}$ on $\mathfrak{g}^*$. Then $\psi^\sharp : \wedge^2 \mathfrak{g} \rightarrow \mathfrak{g}^*$ is a $2$-cocycle in the cohomology of $\mathfrak{g}$ with coefficients in $\mathfrak{g}^*$. The corresponding $\psi^\sharp$-twisted semi-direct product algebra $\mathfrak{g} \ltimes_{\psi^\sharp} \mathfrak{g}^*$ is given by the bracket
\begin{align*}
[(x, \alpha), (y, \beta)] = ([x, y], \mathrm{ad}^*_x \beta - \mathrm{ad}^*_y \alpha + \psi^\sharp (x, y)).
\end{align*}
The direct sum $\mathfrak{g} \oplus \mathfrak{g}^*$ also carries a non-degenerate inner product given by
$\langle (x, \alpha), (y, \beta) \rangle = \frac{1}{2} ( \alpha (y) + \beta (x)).$
\begin{defn}
A $\psi$-twisted generalized complex structure on $\mathfrak{g}$ consists of a linear map $J : \mathfrak{g} \oplus \mathfrak{g}^* \rightarrow \mathfrak{g} \oplus \mathfrak{g}^*$ satisfying
\begin{itemize}
\item[(0)] orthogonality: ~~ $\langle Jr, Js \rangle = \langle r, s \rangle,$
\item[(i)] almost complex: ~~ $J^2 = - \mathrm{id}$,
\item[(ii)] integrability: ~~
$[Jr, Js ] - [r, s] - J ( [Jr, s] + [r, Js]) = 0, ~\text{for } r,s \in \mathfrak{g} \oplus \mathfrak{g}^*.$
\end{itemize}
\end{defn}

Note that the orthogonality condition (0) implies that $J$ must be of the form 
\begin{align}\label{gcs-l}
J = \left( \begin{array}{cc}
N    &      {\bf r}^\sharp \\
\sigma_\flat    &    - N^*
\end{array}
\right),
\end{align}
for some $N \in \mathrm{End}(\mathfrak{g}), ~ {\bf r} \in \wedge^2 \mathfrak{g}$ and $\sigma \in \wedge^2 \mathfrak{g}^*$. However, the conditions (i) and (ii) of the definition imposes some relations between structure components of $J$.

Thus, a $\psi$-twisted generalized complex structure on $\mathfrak{g}$ can also be considered as a triple $(N, {\bf r}, \sigma)$ such that the linear map $J$ of the form (\ref{gcs-l}) is almost complex and satisfies the integrability condition.



\begin{prop}\label{gcs-o-gcs-l}
Let $\mathfrak{g}$ be a Lie algebra and $\psi \in \wedge^3 \mathfrak{g}^*$ be a $3$-cocycle in the cohomology of $\mathfrak{g}$ with coefficients in $\mathbb{K}$. A triple $(N, {\bf r}, \sigma)$  is a $\psi$-twisted generalized complex structure on $\mathfrak{g}$ if and only if the linear map $J = \left( \begin{array}{cc}
N    &      {\bf r}^\sharp \\
\sigma_\flat    &    - N^*
\end{array}
\right) : \mathfrak{g} \oplus \mathfrak{g}^* \rightarrow \mathfrak{g} \oplus \mathfrak{g}^*$ is a $\psi^\sharp$-twisted generalized complex structure on the coadjoint module $\mathfrak{g}^*$ over the Lie algebra $\mathfrak{g}$.
\end{prop}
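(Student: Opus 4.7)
The strategy is to match the two definitions block by block. Any linear map $J:\mathfrak{g}\oplus\mathfrak{g}^*\to\mathfrak{g}\oplus\mathfrak{g}^*$ can be written in block form as
\begin{align*}
J = \begin{pmatrix} N & T \\ \sigma_\flat & -S \end{pmatrix},
\end{align*}
with $N\in\mathrm{End}(\mathfrak{g})$, $S\in\mathrm{End}(\mathfrak{g}^*)$, $T:\mathfrak{g}^*\to\mathfrak{g}$ and $\sigma_\flat:\mathfrak{g}\to\mathfrak{g}^*$. The first step of the proof is to observe that the $\psi^\sharp$-twisted semi-direct product bracket used in Definition \ref{defn-gcs-o} (with $M=\mathfrak{g}^*$ carrying the coadjoint action and $H=\psi^\sharp$) agrees with the bracket on $\mathfrak{g}\ltimes_{\psi^\sharp}\mathfrak{g}^*$ used in the definition of a $\psi$-twisted generalized complex structure on $\mathfrak{g}$. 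Hence the almost-complex condition $J^2=-\mathrm{id}$ and the integrability condition coincide verbatim in the two definitions once $J$ is given in the above block form.

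The second step is to analyze the orthogonality condition $\langle Jr,Js\rangle=\langle r,s\rangle$ with respect to the pairing $\langle(x,\alpha),(y,\beta)\rangle=\tfrac12(\alpha(y)+\beta(x))$. A direct expansion on pure tensors $(x,0),(y,0)$, $(0,\alpha),(0,\beta)$ and mixed pairs shows that orthogonality of $J$ is equivalent to the three constraints $T^*=-T$, $\sigma_\flat^*=-\sigma_\flat$ and $S=N^*$. The first two constraints are precisely the statements that $T$ arises from an element ${\bf r}\in\wedge^2\mathfrak{g}$ via $T={\bf r}^\sharp$ and that $\sigma_\flat$ arises from $\sigma\in\wedge^2\mathfrak{g}^*$, while the third one identifies the lower-right block as $-N^*$. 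Thus $J$ is orthogonal if and only if it has exactly the form (\ref{gcs-l}), parametrized by a triple $(N,{\bf r},\sigma)$.

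Combining these two observations yields both directions of the equivalence. If $(N,{\bf r},\sigma)$ is a $\psi$-twisted generalized complex structure on $\mathfrak{g}$, then by definition $J$ is orthogonal (so it has the form (\ref{gcs-l})) and satisfies the almost-complex and integrability conditions with respect to the bracket of $\mathfrak{g}\ltimes_{\psi^\sharp}\mathfrak{g}^*$; by the first step these are exactly the conditions required in Definition \ref{defn-gcs-o}, so $J$ is a $\psi^\sharp$-twisted generalized complex structure on the coadjoint module $\mathfrak{g}^*$. Conversely, if $J$ of the form (\ref{gcs-l}) is a $\psi^\sharp$-twisted generalized complex structure on the coadjoint module, then the almost-complex and integrability conditions already coincide with those of Definition of a $\psi$-twisted generalized complex structure on $\mathfrak{g}$, and orthogonality is automatic since $J$ is presented in the constrained form (\ref{gcs-l}) — which was shown in the second step to be precisely the orthogonal block form.

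The only slightly delicate point is the bookkeeping in step two: one must be careful with the factor $\tfrac12$ and the adjoint identifications $({\bf r}^\sharp)^*=-{\bf r}^\sharp$ under the natural pairing, and similarly for $\sigma_\flat$ and $N\leftrightarrow N^*$. This is essentially linear algebra, and I expect no real obstacle beyond verifying signs; there is nothing deep about the integrability check because it is the same identity being read on the two sides.
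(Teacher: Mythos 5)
The paper states Proposition \ref{gcs-o-gcs-l} without proof, so there is no argument of the author's to compare against; judged on its own, your strategy is the right one and is essentially the only reasonable route: once one observes that the $\psi^\sharp$-twisted semi-direct product bracket on $\mathfrak{g}\ltimes_{\psi^\sharp}\mathfrak{g}^*$ used in Definition \ref{defn-gcs-o} (with the coadjoint action and $H=\psi^\sharp$) is literally the bracket appearing in the definition of a $\psi$-twisted generalized complex structure on $\mathfrak{g}$, the almost-complex and integrability conditions on the two sides coincide verbatim, and the only remaining issue is to reconcile the orthogonality condition with the constrained block form (\ref{gcs-l}).

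There is, however, one genuine imprecision in your step two. Orthogonality $\langle Jr,Js\rangle=\langle r,s\rangle$ is \emph{quadratic} in $J$, and expanding it on pure elements produces quadratic relations among the blocks (for instance, with $r=(x,0)$, $s=(0,\beta)$ one gets $\sigma_\flat x(T\beta)-(S\beta)(Nx)=\beta(x)$), not the three linear constraints $T^*=-T$, $\sigma_\flat^*=-\sigma_\flat$, $S=N^*$. Indeed orthogonality alone is \emph{not} equivalent to those constraints: $J=\mathrm{diag}(A,(A^*)^{-1})$ is orthogonal for any invertible $A$ but has $S=-(A^*)^{-1}\neq A^*$ in general, while $J=0$ has the constrained form but is not orthogonal. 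What the three linear constraints actually characterize is \emph{skew-adjointness}, $\langle Jr,s\rangle+\langle r,Js\rangle=0$, and skew-adjointness is equivalent to orthogonality only in the presence of $J^2=-\mathrm{id}$ (nondegeneracy of the pairing gives $J^\dagger=J^{-1}=-J$). Since both sides of the proposition impose the almost-complex condition, this is a fixable bookkeeping point rather than a fatal gap: replace ``orthogonality is equivalent to the three constraints'' by ``given $J^2=-\mathrm{id}$, orthogonality is equivalent to skew-adjointness, which is equivalent to the form (\ref{gcs-l})'', and likewise in the converse direction note that orthogonality is automatic for a skew-adjoint $J$ \emph{that also squares to} $-\mathrm{id}$. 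With that correction your argument is complete.
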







\vspace*{1cm}

\noindent {\bf Acknowledgements.} The author thanks the referee for his/her valuable advice which improved the paper. The research is supported by the fellowship of Indian Institute of Technology (IIT) Kanpur.

\vspace*{1cm}

\noindent {\bf Data Availability Statement.} Data sharing is not applicable to this article as no new data were created or analyzed in this study.

\end{document}